\numberwithin{equation}{section}
\definecolor{my-black}{rgb}{0,0,0}
\definecolor{my-blue}{rgb}{0,0,0.8}
\definecolor{my-red}{rgb}{0.8,0,0} 
\definecolor{my-green}{rgb}{0,0.5,0}
\theoremstyle{plain} 
\newtheorem{lemma}{Lemma}[section]
\newtheorem{theorem}{Theorem}
\newtheorem{corollary}{Corollary}[section]
\newtheorem{proposition}{Proposition}[section]
\newtheorem*{theorem*}{Theorem} 
\newtheorem*{theoremA}{Theorem A}
\theoremstyle{definition} 
\newtheorem{remark}{Remark}[section]
\theoremstyle{remark}
\newtheorem*{remark-non}{Remark}
\DeclareMathOperator{\id}{id}
\DeclareMathOperator{\real}{Re}
\DeclareMathOperator{\imag}{Im}
\DeclareMathOperator{\SL}{SL}
\DeclareMathOperator{\PSL}{PSL}
\DeclareMathOperator{\diam}{diam}
\newcommand{\R}{\mathbb{R}}
\renewcommand{\P}{\mathbb{P}}
\newcommand{\C}{\mathbb{C}}
\newcommand{\N}{\mathbb{N}}
\newcommand{\Z}{\mathbb{Z}}
\newcommand{\Ss}{\mathcal{S}}
\renewcommand{\H}{\mathbb{H}}
\title{Perturbed Fourier uniqueness and interpolation results in higher dimensions}
\author{Jo{\~a}o P.G. Ramos and Martin Stoller}
\address{ETH Z{\"u}rich, Department of Mathematics, R{\"a}mistrasse 101, CH-8092, Z{\"u}rich, Switzerland}
\email{joao.ramos@math.ethz.ch}
\address{B{\^{a}}timent des Math{\'e}matiques, EPFL, Station 8, CH-1015 Lausanne,  Switzerland}
\email{martin.stoller@epfl.ch}
\begin{document}

\begin{abstract}
We obtain new Fourier interpolation and -uniqueness results in all dimensions, extending methods and results by the first author and M. Sousa \cite{Ramos-Sousa-perturbation} and the second author \cite{Stoller-fip-spheres}. 
We show that the only Schwartz function which, together with its Fourier transform, vanishes on surfaces close to the origin-centered spheres whose radii are square roots of integers, is the zero function. In the radial case, these surfaces are spheres with perturbed radii, while in the non-radial case, they can be graphs of continuous functions over the sphere. As an application, we translate our perturbed Fourier uniqueness results to perturbed Heisenberg uniqueness for the hyperbola, using the interrelation between these fields introduced and studied by Bakan, Hedenmalm, Montes-Rodriguez, Radchenko and Viazovska \cite{HMRV}.
\end{abstract}
\maketitle

\section{Introduction}
Let $f : \R \to \C$ be a suitably smooth function. One fundamental question, whose answers have consequences in several different areas of mathematical analysis, is the following: How to retrieve, with the ``minimal'' amount of information on $f$ and on its Fourier transform $\widehat{f},$ the values of $f$ (almost everywhere)? 

One of the first and most classical examples is the Shannon--Whittaker interpolation formula: If $\widehat{f}$ is compactly supported on an interval, which we suppose without loss of generality to be $[-1/2,1/2],$ then the \emph{pointwise} interpolation formula 

\[
f(x) = \sum_{n \in \Z} f(n) \frac{\sin (\pi(x-n))}{\pi(x-n)}
\]
holds. Here and henceforth, we normalize for the Fourier transform of $f \in L^1(\R^d)$ by
\[
\mathcal{F}f(\xi) := \hat{f}(\xi) := \int_{\R^d}{f(x) e^{-2 \pi i \langle x ,\xi \rangle }dx}.
\]
In fact, if $\widehat{f} \in L^2(\R)$ has support on $[-1/2,1/2],$ one has that $\{f(n)\}_{n \in \Z} \in \ell^2(\Z),$ and thus it may be proved that the series on the right-hand side of the Shannon--Whittaker formula converges uniformly on compact sets of $\C$, from which one may deduce that $f$ is an entire function of suitable exponential type. The condition that $\hat{f}$ is compactly supported is quite restrictive, however. Indeed, the celebrated \emph{Paley-Wiener theorem} implies that all functions $f \in L^2(\R)$ satisfying the above formula have compactly supported Fourier transforms, and thus represent only a relatively small amount of all $L^2$ functions. 

Recently, an increasing effort has been put on finding interpolation formulas and proving uniqueness results that use information about values of the function \emph{and} its Fourier transform (with no restriction on the support). The historical starting point of interpolation formulas that involve knowledge of values of the function and its Fourier transform is the interpolation formula of Radchenko and Viazovska \cite{RV}, which states that any sufficiently well-behaved\footnote{Schwartz functions are certainly admissible, but see  \cite[Thm. 7.1]{BRS} for a stronger result. Moreover, see \cite[Thm. 7]{RV} for the case of odd functions.} even function $f : \R \rightarrow \C$ can be completely 
recovered from the values $f(\sqrt{n}), \hat{f}(\sqrt{n})$ for non-negative integers $n$. 
This theorem has inspired and influenced many recent works in the field. We mention two of them.

On the one hand, the first author and M. Sousa \cite{Ramos-Sousa-perturbation} devised functional-analytic methods to perturb the Radchenko--Viazovska formula, as well as other interpolation formulas, such as the classical Whittaker--Shannon formula. They proved a perturbed interpolation result for nodes of the form $\sqrt{n + \varepsilon_n}$ where the perturbations $\varepsilon_n$ must obey a bound of the form $|\varepsilon_n| \leq \delta n^{-5/4}$. Along the way, they also proved new exponential bounds on the Radchenko--Viazovska interpolating functions. 

On the other hand, the second author proved in \cite{Stoller-fip-spheres} an interpolation result that generalizes the one by Radchenko and Viazovska to higher dimensions. The main results of \cite[Thm 1, Thm 3]{Stoller-fip-spheres} show that any Schwartz function on $\R^d$ can be completely recovered from its restrictions, and the restrictions of its Fourier transform, to all origin-centered spheres of radius $\sqrt{n}$, where $n \geq 0$ is an integer. It shares some, but not all features with the one-dimensional Ranchenko--Viazovska formula, as the main number-theoretic input is different, and also many technicalities. We explain some of these technicalities in more details after stating Theorem \ref{thm:non-radial-uniqueness-intro} below. 

Given these results,  it was natural to ponder whether the functional-analytic methods by Ramos--Sousa are applicable in the non-radial setting from \cite{Stoller-fip-spheres}. More precisely, could a perturbed interpolation result in higher dimensions exist, in which the ``nodes" are ``perturbed spheres", that is to say, surfaces close to the spheres of radius $\sqrt{n}$?


In this paper, we answer such questions using new ideas on the functional-analytic side stemming from the methods in \cite{Ramos-Sousa-perturbation} to obtain the results indicated in the abstract. All the proofs of the new results in this paper rely on a natural generalization of the Radchenko--Viazovska formula, which was recently established by Bondarenko--Radchenko--Seip in \cite{BRS}. While this is not not explicitly written down in \cite{BRS}, it follows form more general results in that paper. To be more precise, with some additional technical work,  we will prove the following result in \S \ref{sec:fip-radial}.  In the statement of the theorem and elsewhere, we denote by $\mathcal{S}(\R^d)$ the Schwartz space and by $\mathcal{S}_{\text{rad}}(\R^d)$ the subspace of radial Schwartz functions, equipped with the usual topologies.

\begin{theoremA}\label{thm:thmA}
Let $d \geq 1$ be an integer. Set  $k = d/2$ and define the non-negative integers 
\[
\nu_{-}(k) =  \left \lfloor \frac{k+2}{4}  \right \rfloor, \qquad \nu_{+}(k) = \left \lfloor \frac{k+4}{4} \right \rfloor .
\]
Then, for each $\epsilon \in \{ \pm 1\}$,  there exist even Schwartz functions $b_{k,n}^{\epsilon } \in \Ss(\R)$, indexed by integers $n \geq \nu_{\epsilon}(k)$, with the following properties:
\begin{enumerate}[(i)]
\item Seen as radial Schwartz functions on $\R^d$, we have $ \mathcal{F}(b_{k,n}^{\epsilon}) = \epsilon b_{k,n}^{\epsilon}$ and the functions  $a_{k,n}, \tilde{a}_{k,n} \in \Ss(\R)$, defined by
\begin{equation}\label{eq:definition-an}
a_{k,n}(r) = \frac{b_{k,n}^{+}(r) + b_{k,n}^{-}(r)}{2}, \qquad \tilde{a}_{k,n}(r) = \frac{b_{k,n}^{+}(r) - b_{k,n}^{-}(r)}{2},
\end{equation}
are such that, for every $f \in \Ss_{\text{rad}}(\R^d)$, we have
\begin{equation}\label{eq:interpolation-formula-radial-in-thmA}
f(x) = \sum_{n = \nu_{-}(k)}^{\infty}{\left( a_{k,n}(|x|)f(\sqrt{n}) + \tilde{a}_{k,n}(|x|) \hat{f}(\sqrt{n}) \right)},
\end{equation}
where the partial sums on the right converge in the Schwartz topology towards $f$. 
\item There exist absolute constants $c_1, c_2, c_3 \geq 0$, such that for all decay rates $\beta \geq 2k +2$ and all integers  $n \geq \nu_{\epsilon}(k)$ we have
\begin{equation}\label{eq:final-bound-bkn-in-thmA}
\sup_{r \geq 0}{|(1+r^{\beta})b_{k,n}^{\epsilon}(r)|} \leq   \tilde{g}(\beta) e^{c_1 k + c_2 \beta+c_3}\Gamma(\beta/2-k+1) (1+n)^{\beta/2+k+1},
\end{equation}
where $\tilde{g}(\beta) = \max{(1, (\beta/ 2 \pi e)^{\beta/2})}$.
\item There is a constant $C_k >0$ depending only on $k$ such that 
\begin{equation}\label{eq:final-pointwise-bound-bkn-in-thmA}
|b_{k,n}^{\epsilon}(r)| \le C_k (n+1)^{k+1} e^{-c_1|r|/\sqrt{n+1}}
\end{equation}
for all integers $n \geq \nu_{\epsilon}(k)$ and all $r \in \R$. 
\end{enumerate}
\end{theoremA}
As already mentioned, all of the main ideas for the proof of (parts (i) and (ii) of) Theorem \ref{thm:thmA} come from \cite{RV, BRS}; we only need to keep track of the dependence on the dimension $d = k/2$ in various estimates that arise to prove part (ii). Part (iii) is based on the same idea that is used to prove \cite[Theorem~1.5]{Ramos-Sousa-perturbation} in the case $d = 1$.

We now turn to the newer contributions of this paper which can be seen as ``consequences" of Theorem A combined with several other ideas, which we sketch below.  Our first theorem is a perturbation of the radial interpolation formula \eqref{eq:interpolation-formula-radial-in-thmA}. To state it,  let $V^s(\R^d)$ denote the space of all $f \in L^1(\R^d)$ with the property that $(1+|x|^s) f(x)$ and $(1+|\xi|^s) \hat{f}(\xi)$ are both integrable and denote by $V_{\text{rad}}^s(\R^d)$ the subspace of radial functions. We return to these spaces in \S \ref{sec:perturbation-radial}.

\begin{theorem}\label{thm:radial-perturbation-in-intro}
Fix $d \geq 1$, $s \geq 1$, $\eta >0$  and two sequences of real numbers $\varepsilon_n, \hat{\varepsilon}_n$, indexed by integers $n \geq 0$. Then there is $\delta = \delta(s,d,\eta)>0$ such that, if
\begin{equation}\label{eq:smallness-assumption}
|\varepsilon_n|+|\hat{\varepsilon}_n| \leq \delta (1+n)^{-d-(s/2)-2-\eta},
\end{equation}
for all $n \geq 0$, the following holds true. There are functions $h_{d,n}, \tilde{h}_{d,n} \in V^s_{\text{rad}}(\R^d)$ such that $h_{d,n} = \tilde{h}_{d,n} = 0$ for $n < \nu_{-}(d/2)$ and such that, for all integers $s' \geq (d+1)(s+2d+5 + 2\eta) $ and all $f \in V_{\text{rad}}^{s'}(\R^d)$, we have
\begin{equation}\label{eq:pertrubed-radial-interpolation-formula}
f= f(\varepsilon_0) h_{d,0}+\sum_{n=1}^{\infty}{f(\sqrt{n + \varepsilon_n}) h_{d,n}} + \hat{f}(\hat{\varepsilon}_0) \tilde{h}_{d,0}+ \sum_{n=1}^{\infty}{\hat{f}(\sqrt{n + \hat{\varepsilon}_n}) \tilde{h}_{d,n}},
\end{equation}
where the series converges absolutely in $V_{\text{rad}}^{s}(\R^d)$.
\end{theorem}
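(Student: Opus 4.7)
The plan is to adapt the functional-analytic perturbation machinery of \cite{Ramos-Sousa-perturbation} to the higher-dimensional radial setting, where the role of the Radchenko--Viazovska basis is played by the generating system of Bondarenko--Radchenko--Seip \cite{BRS}. Denote by $\{a_{d,n},\tilde a_{d,n}\}_{n\geq 0}\subset V^s_{\text{rad}}(\R^d)$ this unperturbed basis (recalled in \S\ref{sec:fip-radial}), so that every $f\in V^{s'}_{\text{rad}}(\R^d)$ admits the expansion
\[
f=\sum_{n\geq 0} f(\sqrt n)\,a_{d,n}+\sum_{n\geq 0}\hat f(\sqrt n)\,\tilde a_{d,n},
\]
converging in $V^s_{\text{rad}}$, with $a_{d,n}=\tilde a_{d,n}=0$ for $n<\lfloor (d+4)/8\rfloor$. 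The overall strategy is to view the perturbed sampling as a bounded perturbation of the unperturbed one, and to invert it by a Neumann series.

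First, I would introduce the perturbed synthesis operator $A_\varepsilon f:=\sum_n f(\sqrt{n+\varepsilon_n})\,a_{d,n}+\sum_n \hat f(\sqrt{n+\hat\varepsilon_n})\,\tilde a_{d,n}$ and, using the unperturbed formula applied to $f$, rewrite it as $A_\varepsilon=I+T_\varepsilon$ on $V^{s'}_{\text{rad}}$, where
\[
T_\varepsilon f=\sum_n [f(\sqrt{n+\varepsilon_n})-f(\sqrt n)]\,a_{d,n}+\sum_n [\hat f(\sqrt{n+\hat\varepsilon_n})-\hat f(\sqrt n)]\,\tilde a_{d,n}.
\]
The main estimate to prove is that $T_\varepsilon$ is bounded on $V^{s'}_{\text{rad}}$ with $\|T_\varepsilon\|<1$, for $\delta$ small enough. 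This would rest on two ingredients. On the analytic side, for radial $f(x)=F(|x|)$ a mean-value bound gives $|f(\sqrt{n+\varepsilon_n})-f(\sqrt n)|\lesssim |\varepsilon_n|\,n^{-1/2}\sup_{r\in I_n}|F'(r)|$, with an analogous inequality for $\hat f$; these pointwise derivative bounds would be translated into control by $\|f\|_{V^{s'}}$ using the regularity and decay one extracts from integrability of $(1+|\xi|^{s'})\hat f(\xi)$ (and the symmetric statement with $f$ and $\hat f$ swapped). On the basis side, one needs polynomial norm bounds $\|a_{d,n}\|_{V^s}+\|\tilde a_{d,n}\|_{V^s}\lesssim (1+n)^{\alpha(d,s)}$ extracted from \cite{BRS}, together with an analogous $V^{s'}$-bound for $T_\varepsilon f$. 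Combining both, the assumption \eqref{eq:smallness-assumption} makes $\|T_\varepsilon\|$ bounded by a convergent tail multiplied by $\delta$, and the explicit lower bound $s'\geq (d+1)(s+2d+5+2\eta)$ is precisely what buys enough regularity of $f$ to offset the polynomial growth of $\|a_{d,n}\|_{V^s}$.

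Once $\|T_\varepsilon\|<1$, the operator $A_\varepsilon=I+T_\varepsilon$ is invertible on $V^{s'}_{\text{rad}}$ via the Neumann series $A_\varepsilon^{-1}=\sum_{k\geq 0}(-T_\varepsilon)^k$. I would then set
\[
h_{d,n}:=A_\varepsilon^{-1}a_{d,n},\qquad \tilde h_{d,n}:=A_\varepsilon^{-1}\tilde a_{d,n},
\]
both of which lie in $V^s_{\text{rad}}$ and vanish for $n<\lfloor (d+4)/8\rfloor$ since the inputs do. Applying $A_\varepsilon^{-1}$ termwise to the identity $A_\varepsilon f=f$ and using continuity of $A_\varepsilon^{-1}$ against the convergence of the unperturbed expansion yields \eqref{eq:pertrubed-radial-interpolation-formula}, with the series converging absolutely in $V^s_{\text{rad}}$.

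The main obstacle I expect is the careful quantitative bookkeeping in the Neumann step: one must bound the BRS generating functions in $V^s$ and simultaneously control the discrete derivatives $f(\sqrt{n+\varepsilon_n})-f(\sqrt n)$ in a way that is compatible with the $V^{s'}$-norm on $f$. Getting these estimates into matching units, so that \eqref{eq:smallness-assumption} cleanly forces $\|T_\varepsilon\|<1$, is where the interplay with the results of \cite{BRS}, the radial Sobolev-type embeddings for $V^{s'}_{\text{rad}}$, and the explicit polynomial overhead in $s'$ will have to be negotiated with care.
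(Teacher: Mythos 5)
Your overall architecture matches the paper's: define the same perturbed sampling operator (your $A_\varepsilon$ is the paper's $T$), view it as a small perturbation of the identity expressed via the unperturbed BRS interpolation formula, invert via a Neumann series, and set $h_{d,n}$ to be the image of the basis functions under the inverse. However, there is a genuine gap in \emph{which} Banach space you try to invert on, and it is not cosmetic.

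You propose to show $\|T_\varepsilon\|<1$ on $V^{s'}_{\text{rad}}$ and invert $A_\varepsilon$ there. This cannot work with the smallness hypothesis \eqref{eq:smallness-assumption}, because that hypothesis scales with $s$, not $s'$. Concretely, any bound of the form
\[
\|T_\varepsilon f\|_{V^{s'}}\;\lesssim\; \|f\|_{V^{1}}\sum_{n\ge 1}\frac{|\varepsilon_n|+|\hat\varepsilon_n|}{\sqrt n}\,\|a_{d/2,n}\|_{V^{s'}}
\]
requires $\|a_{d/2,n}\|_{V^{s'}}\asymp (1+n)^{d+(s'/2)+3/2}$ (this is \eqref{eq:vs-norm-an} with $s'$ in place of $s$), and inserting $|\varepsilon_n|+|\hat\varepsilon_n|\le \delta(1+n)^{-d-(s/2)-2-\eta}$ leaves a divergent series whenever $s'>s+2\eta$, which is forced by $s'\ge (d+1)(s+2d+5+2\eta)$. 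Moreover, even if one strengthened the hypothesis to make the series converge, the resulting $\delta$ and $h_{d,n}$ would depend on $s'$, contrary to the statement, which produces a single family $h_{d,n}\in V^s_{\text{rad}}$ and a $\delta=\delta(s,d,\eta)$ working simultaneously for \emph{all} admissible $s'$. Your related remark that the lower bound on $s'$ ``buys enough regularity of $f$ to offset the polynomial growth of $\|a_{d,n}\|_{V^s}$'' also misattributes the role of $s'$: in a Neumann-series estimate on a fixed space, the regularity of $f$ gives you nothing beyond the space norm.

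The paper's proof avoids this precisely by inverting on the \emph{small} space: the operator is shown bounded and close to the identity as a map $V^s_{\text{rad}}\to V^s_{\text{rad}}$, using only $\|f\|_{V^1}\lesssim\|f\|_{V^s}$ on the sampling side and the $V^s$-norm growth \eqref{eq:vs-norm-an} on the basis side; the smallness hypothesis with exponent $-d-(s/2)-2-\eta$ is exactly calibrated to make that $V^s\to V^s$ operator norm $<1$. The parameter $s'$ enters only afterwards, and only to ensure that, for $f\in V^{s'}_{\text{rad}}$, the perturbed sampling series actually converges absolutely in $V^s_{\text{rad}}$ (via the decay $|f(x)|\lesssim |x|^{-s'/(d+1)}$ from Proposition \ref{prop:decay}), so that one is entitled to apply $T^{-1}$ term by term. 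Replacing ``invert on $V^{s'}$'' by ``invert on $V^s$ and use $s'$ only for absolute convergence of the sampling series'' repairs your argument and brings it in line with the paper.
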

Theorem \ref{thm:radial-perturbation-in-intro} is proved by interpreting \eqref{eq:pertrubed-radial-interpolation-formula} as a perturbation of the identity operator  on a suitable space $V_{\text{rad}}^{s}(\R^d)$, whree the identity operator is expressed via the formula \eqref{eq:interpolation-formula-radial-in-thmA} (at least on a dense subspace). The method is similar but in a certain sense simpler than the one devised in \cite{Ramos-Sousa-perturbation}, which relied on the fact that the Radchenko--Viazovska formula is a \emph{free} interpolation formula in the space of radial Schwartz functions on $\R$. That is, the assignment $f \mapsto ((f(\sqrt{n}))_{n \geq 0}, (\hat{f}(\sqrt{n}))_{n \geq 0})$ defines an isomorphism between that space and a space of pairs of rapidly decaying sequences. This property allowed the first author and M. Sousa to work on a suitable Hilbert space of pairs of sequences of complex numbers. In the present paper, we do not make use of such a translation of the problem to a space of sequences, which, on the one hand, greatly simplifies the proof, but, on the other hand, potentially weakens the conclusion (due to potentially less precise estimates).

Let us now turn to the setting of non-radial functions and surfaces ``close to spheres" as indicated in the abstract. Here we prove the following Fourier uniqueness result. 
\begin{theorem}\label{thm:non-radial-uniqueness-intro}
Let $d \geq 2$ and for each $n \geq 1$, let $\varepsilon_n, \hat{\varepsilon}_n : S^{d-1} \rightarrow \R$ be bounded  measurable functions and let  $\varepsilon_0, \hat{\varepsilon}_0 \in \R^d$ be vectors. Define subsets  $S, \widehat{S} \subset \R^d$ by\footnote{In the definition of $S, \widehat{S}$, the (possible) terms with $n = 0$ in the union have to be interpreted as $\{ \varepsilon_0 \}$ and $\{\hat{\varepsilon}_0\}$.}
\[
S= \bigcup_{n \geq \nu_{-}(d/2)}{\{ \sqrt{n}\zeta + \varepsilon_n(\zeta) \zeta \,:\, \zeta \in S^{d-1} \}}, \quad \widehat{S}= \bigcup_{n \geq \nu_{-}(d/2)}{\{ \sqrt{n}\zeta + \hat{\varepsilon}_n(\zeta) \zeta \,:\, \zeta \in S^{d-1} \}}.
\]
Then there exists an absolute constant $c >0$ and a constant $\delta = \delta_d >0$ such that the following holds. If $|\varepsilon_0| + | \hat{\varepsilon_0}| \leq \delta$ and if 
\[
\sup_{\zeta \in S^{d-1}}{|\varepsilon_n(\zeta)|} + \sup_{\zeta \in S^{d-1}}{|\hat{\varepsilon}_n(\zeta)|} \leq \delta n^{-10n - (5/2)d - c} \qquad \text{for all } n \geq 1,
\]
then the only Schwartz function $f \in \mathcal{S}(\R^d)$ such that $f|_S = 0$ and $\widehat{f}|_{\widehat{S}} = 0$, is $f = 0$.
%
%
%
\end{theorem}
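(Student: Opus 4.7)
My plan is to reduce the non-radial problem to a perturbation of Stoller's unperturbed uniqueness result \cite{Stoller-fip-spheres} via the spherical harmonic decomposition. Fix an $L^2(S^{d-1})$-orthonormal basis $\{Y_{k,\ell}\}_{k \geq 0,\, 1 \leq \ell \leq d_k}$ of spherical harmonics with $d_k \asymp k^{d-2}$, and write $f \in \mathcal{S}(\R^d)$ as $f(r\zeta) = \sum_{k,\ell} f_{k,\ell}(r) Y_{k,\ell}(\zeta)$. By the Hecke--Bochner identity, the Fourier transform acts within each level, mapping $f_{k,\ell}$ to a Hankel-type transform $\widehat{f}_{k,\ell}$. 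The main theorem of \cite{Stoller-fip-spheres} then states that $f$ is determined by the countable family of values $(f_{k,\ell}(\sqrt n), \widehat{f}_{k,\ell}(\sqrt n))_{n \geq n_0(d),\,k,\,\ell}$, so it suffices to show that this unperturbed data vanishes.

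Taylor-expanding the perturbed vanishing condition along the radial direction yields
\[
0 = f\bigl((\sqrt{n} + \varepsilon_n(\zeta)) \zeta\bigr) = \sum_{k,\ell} Y_{k,\ell}(\zeta) \sum_{j \geq 0} \frac{\varepsilon_n(\zeta)^j}{j!} f_{k,\ell}^{(j)}(\sqrt{n}),
\]
together with an analogous identity for $\widehat{f}$ on $\widehat{S}$. Projecting against $\overline{Y_{k',\ell'}}$ expresses each $f_{k',\ell'}(\sqrt n)$ as a weighted sum of radial derivatives $f_{k,\ell}^{(j)}(\sqrt n)$, with weights given by the spherical moments $\int_{S^{d-1}} \varepsilon_n(\zeta)^j Y_{k,\ell}(\zeta) \overline{Y_{k',\ell'}(\zeta)}\, d\sigma(\zeta)$. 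Assembling both families into a single system gives a linear equation $(I + B_\varepsilon) \mathbf{v} = 0$, where $\mathbf{v}$ collects the data $(f_{k,\ell}(\sqrt n), \widehat f_{k,\ell}(\sqrt n))$ in a suitable weighted Banach space $X$, and $B_\varepsilon$ is the operator encoding the higher-order Taylor terms and the spherical moments of $\varepsilon_n, \widehat{\varepsilon}_n$. If $\|B_\varepsilon\|_{X \to X} < 1$, then a Neumann-series argument forces $\mathbf{v} = 0$, so $f|_{\sqrt{n} S^{d-1}} = 0 = \widehat f|_{\sqrt{n} S^{d-1}}$ for every $n \geq n_0(d)$, and Stoller's theorem gives $f = 0$.

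The crux is the choice of $X$ and the verification that $\|B_\varepsilon\|_{X \to X}$ is small under the hypothesis on $\varepsilon_n, \widehat{\varepsilon}_n$. The entries of $B_\varepsilon$ carry three potentially large factors: pointwise bounds on $f_{k,\ell}^{(j)}(\sqrt n)$ inherited from Schwartz seminorms of $f$; growth in the spherical harmonic index $k$ from $d_k \asymp k^{d-2}$ together with $\|Y_{k,\ell}\|_\infty \lesssim k^{(d-2)/2}$; and the Taylor weight $1/j!$, which must be played against a uniform bound on $\partial_r^j f$ that generally grows in $j$. The hypothesis $|\varepsilon_n(\zeta)| \leq \delta n^{-10n - (5/2)d - c}$ is engineered to dominate all three contributions: the polynomial factor $n^{-(5/2)d - c}$ absorbs the spherical harmonic multiplicities and the $n^{(d-1)/2}$ surface-measure growth, while the superexponential factor $n^{-10n}$ absorbs the growth in the Taylor order and in the coupling between spherical harmonic levels that becomes dominant at the radius $\sqrt n$.

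The main obstacle will be this calibration of $X$ in tandem with the careful bookkeeping of the three growth factors above, in the spirit of the functional-analytic scheme of Ramos and Sousa \cite{Ramos-Sousa-perturbation} but with the new complication of the infinite tower of spherical harmonic levels. One natural choice would combine weights tailored to Stoller's level-by-level reconstruction (hence to the Hankel structure at each level) with a rapidly decreasing weight in $k$, so that the radial perturbation estimates underlying Theorem \ref{thm:radial-perturbation-in-intro} can be applied on each level and then summed with geometric tails. Once this framework is in place, the contraction step itself is standard.
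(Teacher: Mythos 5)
There are two genuine gaps in your scheme, and the paper itself explicitly rules out the strategy you are pursuing. First, the full Taylor identity $f_{k,\ell}(\sqrt n + \varepsilon) = \sum_{j\ge 0}\varepsilon^j f_{k,\ell}^{(j)}(\sqrt n)/j!$ is simply false for general Schwartz functions, which are not real-analytic; at best you get a finite expansion with an integral remainder. More importantly, even if you use a remainder form, your "operator'' $B_\varepsilon$ is not a linear map on the data vector $\mathbf v = (f_{k,\ell}(\sqrt n),\widehat f_{k,\ell}(\sqrt n))$: the higher-order terms involve the derivatives $f_{k,\ell}^{(j)}(\sqrt n)$, which are \emph{not} components of $\mathbf v$, so $(I+B_\varepsilon)\mathbf v = 0$ is not a closed linear system in $X$, and a Neumann-series argument cannot be invoked. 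To close the system you would need to express the derivative data in terms of $\mathbf v$, which is precisely the content of an interpolation formula (or, in the 1D radial Ramos--Sousa argument, the fact that the data map is an isomorphism onto an explicit sequence space). In the non-radial, higher-dimensional setting this is unavailable: the authors point out, citing \cite[Proposition 7.1]{Stoller-fip-spheres}, that the evaluation map from $\mathcal S(\R^d)$ to any reasonable sequence space of spherical functions has infinite-dimensional cokernel and no known image description, which is exactly why the sequence-space route you propose does not go through.

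The paper instead never leaves function space: it works on the Banach space $V^1(\R^d)$ of functions (not sequences), defines the perturbation operator $T$ via the kernels $K_n^d, \tilde K_n^d$ obtained by formally interchanging the $m$- and $n$-sums in the double-series formula \eqref{eq:double-series-formula}, and exploits the key structural fact that $a_{d/2+m,n}=0$ for $n < \nu_-(d/2+m)$, which makes each $K_n^d$ a \emph{finite} sum over $m\le 4n+1$ and hence a bona fide kernel. The estimate $\|Tf-f\|_{V^1}\lesssim (\sum_n \sigma_n \sup_\zeta\|K_n^d(\cdot,\zeta)\|_{V^1})\|f\|_{V^1}$ then reduces to the explicit bounds on $a_{d/2+m,n}$ (Proposition \ref{prop:final-bound-bkn}), with the superexponential factor $n^{-10n}$ absorbing the growth in $m$ up to $4n+1$ rather than, as you surmised, the Taylor order. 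Finally, the vanishing of $f$ on $S$ and $\widehat f$ on $\widehat S$, combined with the smallness of $\varepsilon_n$, forces superpolynomially fast decay of $f$ and $\widehat f$ on the spheres $\sqrt n\, S^{d-1}$, which is what justifies the otherwise non-rigorous interchange and gives $Tf=0$, hence $f=0$. Your proposal, as written, never reaches a closed contraction and also relies on a false Taylor identity, so it does not establish the theorem.
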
 
\begin{remark}
As we will explain by the end of the proof of Theorem \ref{thm:non-radial-uniqueness-intro}, a slight variation of the proof allows us to establish existence of \emph{discrete} Fourier uniqueness sets contained in the union $\cup_{n \ge 1} \sqrt{n} S^{d-1},$ with $O(n^{C n})$-many points lying on the sphere of radius $\sqrt{n} S^{d-1}$. See Remark \ref{rmk:discrete-uniqueness-sets} and Corollary \ref{cor:discrete-uniqueness-sets} for details.
\end{remark}
Let us briefly explain what goes into the proof of Theorem \ref{thm:non-radial-uniqueness-intro} and simultanously compare it to the framework of \cite{Stoller-fip-spheres}. The latter article constructs for all $d \geq 5$  smooth kernels $A_n^d, \tilde{A}_n^{d}: \R^d \times S^{d-1} \rightarrow \C$ such that for all $f \in \mathcal{S}(\R^d)$ and all $x \in \R^d$, one has 
\begin{equation}\label{eq:non-radial-interpolation-formula-intro}
f(x) = \sum_{n=1}^{\infty}{\int_{S^{d-1}}{A_n^d(x, \zeta)}f( \sqrt{n}\zeta) d\zeta}  + \sum_{n=1}^{\infty}{\int_{S^{d-1}}{\tilde{A}_n^d(x, \zeta)}\widehat{f}( \sqrt{n}\zeta) d\zeta}
\end{equation} 
with pointwise absolute convergence and uniform convergence on compact sets avoiding the origin. Given the functional analytic argument with which we prove Theorem \ref{thm:radial-perturbation-in-intro}, a natural approach to perturb the above formula would be to replace $f(\sqrt{n}\zeta)$ by $f(\sqrt{n}\zeta + \varepsilon_n(\zeta)\zeta)$ (and similarly for $\hat{f}$) on the right hand side of \eqref{eq:non-radial-interpolation-formula-intro} and to prove that the resulting expression defines a bounded operator on some space $V^s(\R^d)$ which is close to the identity. However, this does not seem to work because, although the functions $x \mapsto A_n(x, \zeta)$ are smooth and somewhat controlled, they are not seen to decay enough or even belong to  $L^2(\R^d)$. Indeed, the  radial functions denoted ``$b_{p,n}(r)$" in \cite{Stoller-fip-spheres} out of which the kernels $A_n^d$ are built as infinite sums, have only very little decay in $r$, as made precise in \cite[Proposition~8.1]{Stoller-fip-spheres}. Furthermore, an approach based on a suitable space of pairs of sequences of functions on $S^{d-1}$, in the spirit of \cite{Ramos-Sousa-perturbation}, does not seem to work either, as the image of the natural map from $\mathcal{S}(\R^d)$ to a suitable space of sequences of functions on $S^{d-1}$, has infinite dimensional cokerenel (see  \cite[Proposition~7.1]{Stoller-fip-spheres}) and no description of the image is known (in contrast to the related works \cite{RV} or \cite{CKMRV-universal}).

Thus, a different approach is needed, which can roughly be summarized as follows. Instead of perturbing \eqref{eq:non-radial-interpolation-formula-intro} we will (in a certain sense) perturb \emph{another} interpolation formula for non-radial Schwartz functions, which \emph{follows} from the interpolation formula \eqref{eq:interpolation-formula-radial-in-thmA} in Theorem \ref{thm:thmA} and a harmonic analysis result from \cite[Corollary~2.1]{Stoller-fip-spheres}. We write down this formula in equation \eqref{eq:double-series-formula} in \S \ref{sec:perturbation-sphere}. It involves two double series of integrals over the sphere, which may not converge absolutely.  Still, by formally interchanging the sums and the integrals and making use of a key feature the basis functions $a_{d/2,n}, \tilde{a}_{d/2,n}$ in Theorem \ref{thm:thmA} (namely that the first $\asymp d$ of them all vanish; see the definition of $\nu_{-}(d/2)$), we are able to meaningfully write down an operator, which can be thought of as a perturbation of an identity operator, expressed via a (hypothetical) formula that looks just like \eqref{eq:non-radial-interpolation-formula-intro}. If the perturbations are small enough, we can prove that this operator is invertible and its injectivity allows us to derive the uniqueness result of Theorem \ref{thm:non-radial-uniqueness-intro}. 

 
We shall finish the paper with a brief discussion on an application of Theorem \ref{thm:radial-perturbation-in-intro}. In \cite[Section~7,~Open~Problem~(a)]{HM-R},  H. Hedenmalm and A. Montes-Rodr\'iguez pose the following question. Consider the hyperbola $\Gamma = \{ (x_1,x_2) \in \R^2 \colon x_1 x_2 = 1\}$ and a \emph{perturbed} lattice cross
\begin{equation}\label{eq:perturbed-lattice-cross}
\Lambda =  \{(\alpha n + \varepsilon_n,0)\, : \, n \in \Z\} \cup \{(0,\beta n + \widehat{\varepsilon_n})\, : \, n \in \Z\}.
\end{equation}
Is $(\Gamma,\Lambda)$ a \emph{Heisenberg uniqueness pair} whenever $0 < \alpha \beta \le 1$?  We recall that $(\Gamma,\Lambda)$ is a Heisenberg uniqueness pair if for all finite complex Borel measures $\mu$ in the plane $\R^2$ that are supported on the curve $\Gamma$ and absolutely continuous with respect to arc-length measure, one has $\widehat{\mu}|_{\Lambda} = 0 \Rightarrow \mu = 0$.  Here, we normalize the Fourier transform of $\mu$, as in \cite{HM-R}, by $\widehat{\mu}(\xi) = \int_{\R^2}{e^{ \pi i  \langle x, \xi \rangle} d\mu(x)}$, $\xi \in \R^2$.

We partially answer this question in the case $\alpha = \beta =1$\footnote{implying a result in the more general case $\alpha\beta =1$ via dilations.} in the spirit of recent, related work by the first author and F. Gon\c calves on the case of the parabola \cite{Goncalves-Ramos}, where the authors define the notion of \emph{weak} Heisenberg uniqueness pairs. They call a pair $(\Gamma,\Lambda)$ a weak Heisenberg uniqueness pair if the vanishing condition in the definition of a Heisenberg uniqueness pair holds in a suitable class of sufficiently regular measures. 

In a similar spirit,  we consider the measures $\mu = \mu_{f}$ attached to \emph{odd} functions $f \in V^s(\R)$, and characterized by 
\begin{equation}\label{eq:definition-of-mu_f}
\int_{\R^2}{ \varphi \,d\mu_f } = \int_{\Gamma}{ \varphi \,d\mu_f } =\int_{\R^{\times}} \varphi(t,1/t) t^3 f(t) \sqrt{1+t^{-4}}\, dt \quad \text{ for all }  \quad \varphi \in C_c^{\infty}(\R^2).
\end{equation}
Here, the factor $\sqrt{1+t^{-4}}$ reflects the geometry of $\mu_f$ related to the arc-length measure on $\Gamma$ (while the factor $t^3$ and the condition that $f$ is odd are of technical nature). Note that since $f$ is odd, the functions $\xi_1 \mapsto \widehat{\mu_f}(\xi_1, 0)$ and $\xi_2 \mapsto \widehat{\mu_f}(0, \xi_2)$ are both even, so it is natural to consider only the part $\Lambda^{+} = \Lambda \cap [0,+\infty)^2$ of $\Lambda$ and correspondingly only with sequences of perturbations $\varepsilon_n, \hat{\varepsilon}_n$ indexed by $n \in \Z_{\geq 0}$ and such that $n + \varepsilon_n \geq 0$, $n + \hat{\varepsilon}_n \geq 0$ for all $n$.

\begin{theorem}\label{thm:application-hup} 
There exist constants $\delta >0$ and $s_0 \geq 1$ such that the following holds true. For all sequences of real numbers $\varepsilon_n, \hat{\varepsilon}_n$, $n \geq 0$, satisfying $|\varepsilon_n| + |\hat{\varepsilon}_n| \leq \delta n^{-7}$ for all $n \geq 1$  and $\varepsilon_0 = \hat{\varepsilon}_0 = 0$, the perturbed lattice cross $\Lambda$, given by \eqref{eq:perturbed-lattice-cross}, with parameters $\alpha = \beta =1$ and the hyperbola $\Gamma$ form a Heisenberg uniqueness pair $(\Gamma, \Lambda)$ for the set of measures $\{ \mu_f\,:\,f \in V^{s_0}(\R) \text{ odd}\}$. That is, one has $\widehat{\mu_f}|_{\Lambda} = 0 \Rightarrow \mu_f = 0$, for all odd $f \in V^{s_0}(\R)$.
\end{theorem}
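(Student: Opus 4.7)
The plan is to transfer the Heisenberg uniqueness condition for $\mu_f$ on the hyperbola to a one-dimensional perturbed Fourier uniqueness condition for the odd function $f$, and then to invoke the odd, one-dimensional analog of Theorem~\ref{thm:radial-perturbation-in-intro}. The transfer is provided by the theta-function dictionary of \cite{HMRV} which relates $\widehat{\mu_f}$ along the two axes to $\sqrt{n}$-samples of $f$ and $\hat f$.

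\emph{Step 1 (odd perturbed uniqueness on $\R$).} First I would prove the odd counterpart of Theorem~\ref{thm:radial-perturbation-in-intro} in dimension one: there exist $\delta_0 > 0$ and $s_0 \geq 1$ such that, under the smallness $|\varepsilon_n| + |\hat\varepsilon_n| \le \delta_0 n^{-7}$ for $n \ge 1$, every odd $f \in V^{s_0}(\R)$ satisfying $f(\sqrt{n+\varepsilon_n}) = \hat f(\sqrt{n+\hat\varepsilon_n}) = 0$ for all $n \ge 1$ vanishes identically. The proof parallels that of Theorem~\ref{thm:radial-perturbation-in-intro}: working in the subspace of odd functions of $V^s(\R)$, I would start from the odd Radchenko--Viazovska interpolation basis (see \cite[Thm.~7]{RV} and \cite{BRS}), import the exponential decay estimates of Proposition~\ref{prop:final-exponential-bound} in the odd sector, and express the candidate perturbed sampling operator as a perturbation of the identity whose norm is controlled by the prescribed smallness of $\varepsilon_n, \hat\varepsilon_n$. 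Taking $s$ of order $7$ and $\eta$ small in \eqref{eq:smallness-assumption} then yields the decay rate $n^{-7}$.

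\emph{Step 2 (HMRV translation).} For an odd $f \in V^{s_0}(\R)$, direct computation from \eqref{eq:definition-of-mu_f} using the oddness of $f$ gives
\[
\widehat{\mu_f}(\xi_1, 0) \;=\; 2\int_0^\infty \cos(\pi t \xi_1)\, t\sqrt{1+t^4}\, f(t)\, dt
\]
and an analogous expression for $\widehat{\mu_f}(0,\xi_2)$ after the substitution $s = 1/t$. The theta-function framework of \cite{HMRV} converts the vanishing of $\widehat{\mu_f}$ at integer points of the two axes into the vanishing of $f$ and $\hat f$ at $\sqrt{n}$-nodes, via the modular transformation properties of Jacobi-type theta sums. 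Extending this correspondence pointwise in the axial coordinate to a neighbourhood of each positive integer allows one to replace the integer $n$ by the perturbed value $n + \varepsilon_n$ (resp.\ $n + \hat\varepsilon_n$). Hence $\widehat{\mu_f}|_\Lambda = 0$ forces $f(\sqrt{n+\varepsilon_n}) = \hat f(\sqrt{n+\hat\varepsilon_n}) = 0$ for all $n \ge 1$; the assumption $\varepsilon_0 = \hat\varepsilon_0 = 0$ is used at the $n = 0$ endpoint, where the identifying multipliers may degenerate. Combining with Step 1 gives $f \equiv 0$, hence $\mu_f = 0$, proving the theorem.

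The hard part is Step 2. The theta-function dictionary of \cite{HMRV} is naturally phrased on the unperturbed integer lattice; promoting it to a pointwise statement valid on a neighbourhood of each positive integer, while controlling the non-vanishing multipliers uniformly from below, is the technical crux of the argument. Once this extension is in place, the error introduced by perturbations of size $n^{-7}$ is absorbed by the functional-analytic estimates of Step 1 and the proof closes.
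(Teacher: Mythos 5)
Your plan hinges on Step 2, which contains a genuine gap: the Heisenberg uniqueness condition $\widehat{\mu_f}|_\Lambda = 0$ does \emph{not} translate into the vanishing of $f$ and $\widehat f$ at $\sqrt{n}$-nodes. Writing $g(t) = t\sqrt{1+t^4}\,f(t)$, the condition $\widehat{\mu_f}(n+\varepsilon_n,0) = 0$ is the statement $\int_\R g(\tau)e^{\pi i\tau(n+\varepsilon_n)}\,d\tau = 0$, i.e.\ vanishing of a single weighted Fourier transform of $g$ at (perturbed) integers; and $\widehat{\mu_f}(0,m+\hat\varepsilon_m) = 0$ reads $\int_\R g(\tau)e^{\pi i(m+\hat\varepsilon_m)/\tau}\,d\tau = 0$, which is a \emph{different} transform of the \emph{same} function $g$ (involving $1/\tau$, not the Fourier dual variable). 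Neither of these is a pointwise sample of $f$ or $\widehat f$ at $\sqrt{n+\varepsilon_n}$, so your invocation in Step 1 of an odd one-dimensional analogue of Theorem~\ref{thm:radial-perturbation-in-intro} has nothing to latch onto. The HMRV dictionary you invoke does not produce the correspondence you assert; what it produces is a lift to four dimensions.

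The actual mechanism, which your proposal misses, is the following. From $g$ one forms the antiderivative $G(x) = \int_{-\infty}^x g$, which is well-defined and decaying thanks to $\widehat{\mu_f}(0,0) = \int_\R g = 0$ (this is where $\varepsilon_0 = \hat\varepsilon_0 = 0$ enters), and then the radial function $\Phi(x) = \int_\R G(\tau)e^{\pi i\tau|x|^2}\,d\tau$ on $\R^4$. The choice of $d=4$ is forced by the modular weight: the Fourier transform of the Gaussian $e^{\pi i z|x|^2}$ on $\R^4$ is $(z/i)^{-2}e^{\pi i(-1/z)|x|^2}$, which, after an integration by parts replacing $G$ by $g$, converts the axial samples of $\widehat{\mu_f}$ precisely into $\Phi$ and $\mathcal F_4\Phi$ evaluated on spheres of radii $\sqrt{n+\varepsilon_n}$ and $\sqrt{m+\hat\varepsilon_m}$; see the equivalences \eqref{eq:orthogonality-1}--\eqref{eq:orthogonality-2}. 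One then applies Theorem~\ref{thm:radial-perturbation-in-intro} with $(d,s,\eta) = (4,1,1/2)$, whose exponent $d + s/2 + 2 + \eta = 7$ is exactly the rate in the hypothesis $|\varepsilon_n|+|\hat\varepsilon_n| \leq \delta n^{-7}$ --- a further signal that the argument is running in dimension four, not one. Finally $\Phi = 0 \Rightarrow G = 0 \Rightarrow g = 0 \Rightarrow f = 0$. Your Step~1 is not unreasonable as a self-contained statement (the $k=1/2$ case of \S\ref{sec:fip-radial} does govern the odd one-dimensional basis), but it is not the ingredient the proof needs, and no version of Step~2 can supply the reduction you propose.
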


The main idea in the proof is to construct an auxiliary radial function $\Phi$ on $\R^4$, originally from  \cite{HMRV}, which vanishes, together with its Fourier transform, on spheres of radii $\sqrt{n + \varepsilon_n}$ and $\sqrt{n + \hat{\varepsilon}_n}$ respectively and thus falls under the scope of Theorem \ref{thm:radial-perturbation-in-intro}. A bit more specifically, if we put $g(t) = t^3 f(t) \sqrt{1+t^{-4}}$, then $\Phi$ is given as the composition of the squared Euclidean norm $\R^4 \rightarrow [0, \infty)$ with a one-dimensional Fourier transform of an \emph{anti-derivative} of $g$.  The somewhat artificial conditions on $f$ are imposed so that we can apply Theorem \ref{thm:radial-perturbation-in-intro} to $\Phi$ in the case $d = 4$. Thus we do not expect these  restrictions to be essential for a more general version of our result to hold, although new ideas seem to be necessary for that purpose.

\subsection{Acknowledgements} The authors are grateful to Mateus Sousa for several discussions during the early stages of this manuscript and to Danylo Radchenko for remarks and suggestions on its final version. We would also like to thank the anonymous referee for valuable comments and suggestions. J.P.G.R. acknowledges financial support through the ERC grant agreement No. 721675 “Regularity and Stability in Partial Differential Equations (RSPDE)".

\section{Proof of Theorem A}\label{sec:fip-radial}
The purpose of this preparatory section is to prove Theorem A, upon which the proofs of Theorems \ref{thm:radial-perturbation-in-intro} and \ref{thm:non-radial-uniqueness-intro} are based (Theorem \ref{thm:application-hup} is based on Theorem \ref{thm:radial-perturbation-in-intro}). The reader who is willing to take Theorem A for granted and is mainly interested in seeing how it is applied in the proofs our main theorems  can also directly go to \S \ref{sec:perturbation-radial} (for Theorem \ref{thm:radial-perturbation-in-intro}), \S \ref{sec:perturbation-sphere} (for Theorem \ref{thm:non-radial-uniqueness-intro}) or to  \S \ref{sec:application} (for Theorem \ref{thm:application-hup}),  depending upon interest.
%
\subsection{Set up and proof of part (i) in Theorem A  }\label{sec:prelim-for-radial}
For the remainder of \S \ref{sec:fip-radial}, we let $k$ denote a half integer $\geq 1/2$ and we let $\epsilon \in \{ \pm 1 \}$ denote a sign. The interpretation of $k$ is that $2k$ is the dimension of $\R^{2k}$ on which we will consider our radial functions, as in Theorem A (although, for most of the analysis, $k$ could be any nonnegative real number as in \cite{BRS}). We recall the definition of the numbers $\nu_{\epsilon}(k)$ given in Theorem A and now also define the auxiliary numbers $\mu_{\epsilon}(k)  \in [-7/8,0]$ by 
\begin{equation}\label{eq:definition-mu-nu}
\nu_{-}(k) = \left \lfloor \frac{k+2}{4}\right \rfloor, \quad  \mu_{-}(k)  = -\left \lbrace \frac{k+2}{4} \right \rbrace, \quad
\nu_{+}(k) = \left \lfloor \frac{k+4}{4}\right \rfloor, \quad \mu_{+}(k) = -\left \lbrace \frac{k+4}{4} \right \rbrace.
\end{equation}
Here, $\lbrace x \rbrace, \lfloor x \rfloor$ denote the the fractional- and integer part of $x \in \R$ respectively so that $x= \lbrace x \rbrace + \lfloor x \rfloor$. 

On the upper half-plane $\H = \{ \tau \in \C \,:\,  \imag(\tau) > 0 \}$, we determine a holomorphic  logarithm $\tau \mapsto \log(\tau/i)$ by requiring that its value at $\tau = i$ is zero and we define (complex) powers $(\tau/i)^{-k}$ accordingly. For $(r, z) \in \R \times \H$, let $\varphi_r(z):= e^{\pi i z r^2}$. Then $z \mapsto (r \mapsto \varphi_r(z))$ defines a continuous map $\H \rightarrow \mathcal{S}_{\text{rad}}(\R^1)$ that is of moderate growth, in the sense that its post-composition with any continuous semi-norm is of moderate growth (see \cite[\S 4.1]{CKMRV-universal} or \cite[\S 3]{BRS} for the definition of moderate growth that we use). Therefore, by \cite[Theorem 3.1]{BRS}, there are two-periodic analytic functions $\tau \mapsto F_k^{\epsilon}(\tau,r)$ (one for each $r,k$ and $\epsilon$) of moderate growth, satisfying
\begin{equation}\label{eq:functional-equation-F}
F_k^{\epsilon}(\tau,r)- \epsilon(\tau/i)^{-k}F_k(-1/\tau, r) = \varphi_r(\tau) - \epsilon (\tau/i)^{-k}\varphi_r(-1/\tau)
\end{equation}
and admitting the Fourier expansion $F_k^{\epsilon}(\tau,r) = \sum_{n=\nu_{\epsilon}(k)}^{\infty}{b_{k,n}^{-\epsilon}(r)e^{\pi i n \tau}}$ in which the coefficients are given by
\begin{equation}\label{eq:def-bk-eps}
b_{k,n}^{-\epsilon}(r) = \frac{1}{2} \int_{iy-1}^{iy+1}{F_k^{\epsilon}(\tau,r) e^{- \pi i n \tau}d\tau},
\end{equation}
for any $n \in \Z$, independently of $y >0$. The equation \eqref{eq:def-bk-eps} is the definition of $b_{k,n}^{\epsilon}$ that appears in Theorem A. Since the collection of radial Schwartz functions $\R^{2k} \ni x \mapsto \varphi_{|x|}(z) + \epsilon (z/i)^{-k}\varphi_{|x|}(-1/z)$ for $z \in \H$ generates a dense subspace of the $\epsilon$-eigenspace of the Fourier transform acting on $\mathcal{S}_{\text{rad}}(\R^{2k})$ (this  follows from \cite[Lemma 2.2]{CKMRV-universal}) we deduce from the functional equations \eqref{eq:functional-equation-F} that for all $f$ in that space and all $x \in \R^{2k}$ we have 

\begin{equation}\label{eq:radial-interpolation-formula-in-eigenspace}
f(x) = \sum_{n = \nu_{\epsilon}(k)}^{\infty}{f(\sqrt{n})b_{k,n}^{\epsilon}(|x|)}.
\end{equation}
Indeed, the definition of functions $b_{k,n}^{\epsilon}$ is precisely such that the functional equation \eqref{eq:functional-equation-F} says that \eqref{eq:radial-interpolation-formula-in-eigenspace} holds for all $f$ of the form $f(x) = \varphi_{|x|}(z) + \epsilon (z/i)^{-k}\varphi_{|x|}(-1/z)$ for all $z \in \H$. On the other hand, since for each fixed $x \in \R^d$ with $|x| = r$, the sequence $n \mapsto b_{k,n}^{\epsilon}(r)$ grows polynomially with $n$ (as we will prove and as follows in a qualitative form already from the moderate growth assumption; see below),  both the right and left-hand side of \eqref{eq:radial-interpolation-formula-in-eigenspace} define tempered distributions.

Thus, by writing a general $f \in \mathcal{S}_{\text{rad}}(\R^{2k})$ as $f = (f+\hat{f})/2 + (f- \hat{f})/2$ and applying the formula \eqref{eq:radial-interpolation-formula-in-eigenspace} to each summand, we get the interpolation formula \eqref{eq:interpolation-formula-radial-in-thmA}, stated in part (i) of Theorem A, with $a_{k,n}$ and $\tilde{a}_{k,n}$ defined as in Theorem A. 

It may not be immediately clear why \cite[Theorem 3.1]{BRS} implies that each $r \mapsto b_{k,n}^{\epsilon}(r)$ is a Schwartz function, but this is the case and will also be implicitly proven in our estimates below (by combining Proposition \ref{prop:decay} with \eqref{eq:final-bound-bkn-in-thmA}). Moreover, each $b_{k,n}^{\epsilon}$ is an $\epsilon$-eigenvector for the Fourier transform on $\R^{2k}$. In any case, the cited Theorem \emph{directly} implies $b_{k,n}^{\epsilon}(r)$ grows at most polynomially in $n$, for each fixed $r$ and this alone suffices to establish \eqref{eq:radial-interpolation-formula-in-eigenspace} (and \eqref{eq:interpolation-formula-radial-in-thmA}) with point-wise absolute convergence.

\subsubsection{Modular kernels} Here, we recall from \cite{BRS} how the functions $F_k^{\epsilon}$ appearing in \ref{eq:functional-equation-F} are constructed as integral transforms of certain modular kernels and how these kernels are defined. We will need them in the next subsection. 

Let $\Gamma_{\theta} \leq \PSL_2(\Z)$ denote the theta group. It is the image in $\PSL_2(\Z)$ of the subgroup of $\SL_2(\Z)$ generated by $S,T^2 \in \SL_2(\Z)$, where
\[
S:= \begin{pmatrix}
0 &-1\\
1 & 0
\end{pmatrix}, \qquad T := \begin{pmatrix}
 1 & 1\\
 0 & 1
\end{pmatrix}
\]
denote the well-known generators for $\SL_2(\Z)$. We use the open fundamental domain $\mathcal{D} = \{ z \in \H \,:\, |z| >1 ,-1 < \real(z) < 1 \}$ for $\Gamma_{\theta} \backslash \H$. We need the three theta functions
\[
\Theta_2(z) = \sum_{n \in \Z}{e^{\pi i (n+1/2)^2 z}}, \qquad \Theta_3(z) = \theta(z) = \sum_{n \in \Z}{e^{\pi i n^2 z}}, \qquad \Theta_4(z) = \sum_{n \in \Z}{(-1)^ne^{\pi i n^2 z}},
\]
the modular lambda invariant $\lambda = \Theta_2^4/ \Theta_3^4$ and the functions
\[
J(z) := J_{+}(z) :=  \frac{16}{\lambda(z)(1- \lambda(z))} = 16 \frac{ \Theta_3(z)^8}{\Theta_2(z)^4 \Theta_4(z)^4}, \qquad J_{-}(z) := 1-2 \lambda(z) .
\]
We recall that none of $\Theta_2^4, \Theta_3^4, \Theta_4^4$, $\lambda$ has a zero on the upper half-plane, that they take real, positive values on $i\R_{>0}$ and real values on the boundary of $\mathcal{D}$. For $\kappa \in \R$ we define $\theta^{\kappa}$ via $\log{\theta}(\tau) := \int_{i\infty}^{\tau}{\theta'(z)/\theta(z) dz}$ and based on it, we define the automorphy factors\footnote{This is the reciprocal of the one used in \cite{BRS}.}  
\[
j_{\theta,k}(\gamma, z) := \theta^{2k}(\gamma z)/\theta^{2k}(z) \quad \text{for} \quad (\gamma, z) \in \PSL_2(\R) \times \H.
\] 
Let $\chi_{\epsilon} : \Gamma_{\theta} \rightarrow \{\pm 1\}$ denote the group homomorphism satisfying $\chi_{\epsilon}(T^2) = 1$ and $\chi_{\epsilon}(S) = \epsilon$. For any function $f$ defined on $\H$ with values in a complex vector space and any $\gamma \in \Gamma_{\theta}$, define the function $f|_{k}^{\epsilon}\gamma $ by $f|_{k}^{\epsilon}\gamma (z) = \chi_{\epsilon}(\gamma) j_{\theta,k}(\gamma,z)^{-1}f(\gamma z)$.  

The formula $\lambda' = \pi i \lambda(1- \lambda) \theta^4$ implies $J' = - \pi i J_{-}J_{+} \theta^4$. We use it to write the kernels on \cite[Page~18]{BRS} as
\begin{align}
\mathcal{K}_k^{+}(\tau,z) &= \frac{1}{\pi i } \frac{J'(z)}{J(z)-J(\tau)} \frac{\theta^{2k}(\tau)}{\theta^{2k}(z)} \frac{J(z)^{\nu_{+}(k)-1}}{J(\tau)^{\nu_{+}(k)-1}} ,\label{eq:kernels-with-derivative-plus} \\
\mathcal{K}_k^{-}(\tau,z) &= \frac{1}{\pi i } \frac{J'(z)}{J(z)-J(\tau)} \frac{\theta^{2k}(\tau)}{\theta^{2k}(z)} \frac{J(z)^{\nu_{-}(k)-1}}{J(\tau)^{\nu_{-}(k)-1}} \frac{J_{-}(\tau)}{J_{-}(z)}. \label{eq:kernels-with-derivative-minus}
\end{align}
For each $\tau \in \mathcal{D}$ and $r \in \R$ we have, by \cite[Proposition~3.3]{BRS},
\begin{equation}\label{eq:standard-integral-representation-F}
F_k^{\epsilon}(\tau,r) = \frac{1}{2}\int_{-1}^{1}{\mathcal{K}_k^{\epsilon}(\tau,z) \varphi_r(z) dz},
\end{equation}
where the path from $-1$ to $1$ is taken along a semicircle, oriented clockwise. For each fixed $\tau \in \mathcal{D}$, the function $r \mapsto F_k^{\epsilon}(\tau,r)$ is a Schwartz function, since $z \mapsto \mathcal{K}_k^{\epsilon}(\tau,r)\varphi_r(z)$ is a continuous, Schwartz-space valued map which extends continuously by zero at the cusps $-1, 1$. That is, the limit is  the zero function when these points are approached within $\overline{\mathcal{D}}$.

Before we turn to estimating the Fourier coefficients $b_{k,n}^{\epsilon}(r)$ in the next section, let us show here that these are real-valued. By \eqref{eq:def-bk-eps}, it suffices to show that $\overline{F_k^{\epsilon}(\tau,r)} = F_k^{\epsilon}(- \overline{\tau},r)$ for all $\tau \in \H, r \in \R$. For this, it suffices to  show that $F_k^{\epsilon}(\tau,r)$ is real-valued on the imaginary axis, by the Schwarz reflection principle. In fact, it suffices to show $F_k^{\epsilon}(it_0, r) \in \R$ for all $t_0 >1$ (say).  To that end, we fix $\tau \in \mathcal{D}$ and first apply a contour shift to \eqref{eq:standard-integral-representation-F} to obtain the following expression\footnote{To see this, apply the residue theorem to the boundary of  $\{z \in \mathcal{D}\,:\, \imag(z) < y, |z-1| > \varepsilon, |z+1|> \varepsilon\}$, for some fixed $y > \max{(\imag(\tau),1)}$ and then let $\varepsilon \rightarrow 0$.}
\[
F_k^{\epsilon}(\tau,r) = e^{\pi i \tau r^2} + \int_{0}^{y}{\mathcal{K}_k^{\epsilon}(\tau,1+it)\sin(\pi r^2)e^{- \pi tr^2}dt} + \frac{1}{2}\int_{iy-1}^{iy+1}{\mathcal{K}_k^{\epsilon}(\tau,z)e^{\pi i z r^2}dz},
\]
where $y > \max{(\imag(\tau),1)}$. Now we take $\tau = it_0$ with $t_0 > 0$, conjugate the above identity and conclude by using that for all  $x  \in \R, t_0 > 0$ we have \[
\overline{\mathcal{K}_k^{\epsilon}(it_0, x+iy)} = \mathcal{K}_k^{\epsilon}(it_0, -x+iy), \quad \mathcal{K}_k^{\epsilon}(it_0,1+it) \in \R.
\]
\begin{remark-non}
By \cite[Proposition 3.2]{BRS} one also has $b_{k,n}^{\epsilon}(r) = \int_{-1}^{1}{g_{k,n}^{\epsilon}(z) \varphi_r(z) dz}$ for certain weakly holomorphic modular  forms $g_{k,n}^{\epsilon}$ of weight $k$ and character $\chi_{\epsilon}$ for $\Gamma_{\theta}$. This representation can be used to give an alternative, slightly more direct proof of the fact that $b_{k,n}^{\epsilon}(r) \in \R$ via a contour shift similar to the above. More significantly, it  implies that, for fixed $n \geq \nu_{\epsilon}(k)$, all of the values $b_{k,n}^{\epsilon}(\sqrt{m})$, $m \geq \nu_{\epsilon}(k)$, are zero except for one, since the numbers $b_{k,n}^{\epsilon}(\sqrt{m})$ are coefficients of the principal part of the Laurent expansion at infinity of $g_{k,n}^{\epsilon}$, which is seen to have a correspondingly simple form. The present paper does \emph{not} rely on such facts, but they should imply (as in \cite{RV}) that \eqref{eq:interpolation-formula-radial-in-thmA} is a free interpolation formula and thus, in principle, allow an application of the same perturbation techniques as in \cite{Ramos-Sousa-perturbation} to perturb the formula \eqref{eq:interpolation-formula-radial-in-thmA}.
\end{remark-non}
\subsection{Proof of part (ii) in Theorem A}\label{sec:estimates-for-basis-functions}
This subsection is devoted to the  proof of part (ii) in Theorem A, which stated an estimate for $\sup_{r \geq 0}{|(1 + r^{\beta})b_{k,n}^{\epsilon}(r)|}$, that makes the dependence on all parameters explicit. This will be a bit long and complicated, so we give a brief overview before we start. We will divide the proof of the bounds on the functions $b_{k,n}^{\epsilon}$ as in \cite{BRS}. In \S \ref{sec:estimates-in-fundamental-domain} we first bound $(1+ r^{\beta})^{}F_k^{\epsilon}(\tau,\varphi_r)$ for $\tau \in \mathcal{D}$, the fundamental domain. Then  in \S \ref{sec:cocycle-estimates} we use the functional equations repeatedly to deduce bounds in all of $\H$. We conclude in \S \ref{sec:conclusion} by applying the triangle inequality at a suitable height $y$ to the integral \eqref{eq:def-bk-eps} defining $b_{k,n}^{\epsilon}(r)$. Moreover, throughout \S \ref{sec:estimates-for-basis-functions}, we work with the following notations:
\begin{itemize}
\item a half-integer $k \geq 1/2$ and a sign $\epsilon \in \{\pm 1 \}$,
\item a positive real number $\beta >0$, thought of as a decay rate,
\item a real number $r \geq 0$, thought of as a radius. 
\end{itemize}
Since it will suffice for our later applications and is technically convenient, we assume throughout that $\beta \geq 2k+2$. We omit some of these parameters in our notation and abbreviate
\begin{itemize}
\item the slash action $f|_k^{\epsilon} \gamma$ to $f|\gamma$,
\item kernels $\mathcal{K}_k^{\epsilon}(\tau,z)$ to $\mathcal{K}(\tau,z)$,
\item the numbers $\mu_{\epsilon}(k), \nu_{\epsilon}(k)$ (defined in \eqref{eq:definition-mu-nu}) to $\mu_{\epsilon}, \nu_{\epsilon}$,
\item and $(1+r^{\beta})F_k^{\epsilon}(\tau,r)$ to $F(\tau)$.
\end{itemize}
For the remainder of \S \ref{sec:estimates-for-basis-functions}, we adopt the convention that  a \emph{constant} is a positive real number that does \emph{not} depend on $k, \beta,$ or $r$, but may depend on (the sometimes hidden) sign $\epsilon$. We shall work with a $1$-cocycle $\gamma \mapsto \psi_{\gamma}$, $\Gamma_{\theta} \rightarrow \{ f: \H \rightarrow \C \}$. That is, a collection of functions indexed by the elements of $\Gamma_{\theta}$ satisfying $\psi_{AB} = \psi_B + \psi_A|B$ for all  $A,B \in \Gamma_{\theta}.$ In our case, the cocyle is determined on generators by
\[
\psi_{T^2}(\tau) = 0, \qquad \psi_{S}(\tau) = (1+r^{\beta}) \left( \varphi_r(\tau)- \epsilon (\tau/i)^{-k} \varphi_r(-1/\tau) \right),
\]
where we recall that $\varphi_r(z) = e^{\pi i z r^2}$. Then $F-F|\gamma = \psi_{\gamma}$ for all $\gamma \in \Gamma_{\theta}$ (see \cite[\S 6.2]{BRS} for a justification). One-variable calculus shows that for all $\tau \in \H$,
\begin{equation}\label{eq:universal-estimate-gaussian}
(1+r^{\beta}) |e^{\pi i \tau r^2}| \leq  1+g(\beta) \imag(\tau)^{-\beta/2}, \quad \text{where} \quad g(\beta):=\left( \frac{\beta}{2 \pi e} \right)^{\beta/2}.
\end{equation}
It follows that
\begin{align}
|\psi_S(\tau)| &\leq (1+|\tau|^{-k}) + g(\beta) \imag(\tau)^{-\beta/2}(1+|\tau|^{\beta-k}) \notag \\
			   &\leq \tilde{g}(\beta) \left( 1 + |\tau|^{-k} + \imag(\tau)^{-\beta/2}(1+|\tau|^{\beta-k}) \right), \quad \text{where} \quad  \tilde{g}(\beta):= \max{(1, g(\beta))}. \label{eq:universal-estimate-psi_S}
\end{align}

\subsubsection{Estimates in the fundamental domain}\label{sec:estimates-in-fundamental-domain}
We start with estimates in the fundamental domain, for which we first record a couple of asymptotic relations of the building blocks $J, \theta$ of the kernels $\mathcal{K}$.
\begin{lemma}\label{lem:asymptotic-relatoins-near-1}
Fix a compact set $\Omega \subset  \overline{\mathcal{D}} \cup S \overline{ \mathcal{D}}$ containing $-1$, bounded away from the point $0$. Then, writing $w = J(z)$ and confining $z \in \Omega$, we have
\begin{align}
 \imag(z)^{-1} \asymp_{\Omega} \log{(e+1/|w|)}, \label{eq:imag-in-terms-of-w}\\
 |\theta(z)|^2 \asymp_{\Omega} |w|^{1/4}\log{(e+1/|w|)}. \label{eq:theta-squraed-in-terms-of-w}
\end{align}
If $\Omega$ is sufficiently small, then we can replace $\log{(e+1/|w|)}$ by $\log{(1/|w|)}$ in \eqref{eq:imag-in-terms-of-w} and \eqref{eq:theta-squraed-in-terms-of-w}.
\end{lemma}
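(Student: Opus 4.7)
The plan is to introduce a local uniformizer at the cusp $-1$ of $\Gamma_{\theta}$, reducing the analysis to the behavior of $J$ and $\theta$ near $i\infty$, where standard $q$-expansions apply. Since $g := ST \in \PSL_2(\Z)$ satisfies $g(-1) = i\infty$, I set $\tilde z := g z = -1/(z+1)$. Using the classical transformation formulas for $\Theta_i$ under $T$ and $S$ (and $g^{-1} = T^{-1}S$ in $\PSL_2(\Z)$), a direct computation gives
\[
\Theta_2(z)^4 = \tilde z^2\, \Theta_4(\tilde z)^4, \qquad \Theta_3(z)^4 = -\tilde z^2\, \Theta_2(\tilde z)^4, \qquad \Theta_4(z)^4 = -\tilde z^2\, \Theta_3(\tilde z)^4,
\]
and consequently
\[
J(z) = -16\, \frac{\Theta_2(\tilde z)^8}{\Theta_3(\tilde z)^4\, \Theta_4(\tilde z)^4}.
\]
With $q' := e^{\pi i \tilde z}$, inserting the well-known expansions $\Theta_2(\tilde z)^4 = 16 q' + O(q'^3)$ and $\Theta_3(\tilde z)^4 \Theta_4(\tilde z)^4 = 1 + O(q'^2)$ yields $J(z) = -4096\, q'^2\, (1 + O(q'))$, hence $|w| = |J(z)| \asymp |q'|^2$ and $\imag(\tilde z) \asymp \log(1/|w|)$ as $z \to -1$. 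Analogously, $|\theta(z)|^4 = |\Theta_3(z)|^4 = |\tilde z|^2\, |\Theta_2(\tilde z)|^4 \asymp |\tilde z|^2\, |q'|$.

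The next step is a geometric comparison between $\imag(z)$, $\imag(\tilde z)$, and $|z+1|$. Write $w := z + 1 = \rho e^{i\phi}$ with $\phi \in (0,\pi)$; then
\[
\imag(z) = \rho \sin\phi, \qquad \imag(\tilde z) = \frac{\sin\phi}{\rho}, \qquad |\tilde z| = \frac{1}{\rho}.
\]
A direct computation in the local coordinate $w = u + iv$ shows that, near $w = 0$, $\overline{\mathcal{D}}$ is defined by $u \geq 0$, $v \geq \sqrt{2u}$, while $S\overline{\mathcal{D}}$ is defined by $u \leq 0$, $v \geq \sqrt{-2u}$ (the latter from the first-order expansion $S(1+\tilde w) = -1 + \tilde w + O(\tilde w^2)$ applied to the analogous local description of $\overline{\mathcal{D}}$ near $z = 1$). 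In either case $v^2 \geq 2|u|$, so $|u/v| \leq v/2 \to 0$ and $\sin\phi = (1+(u/v)^2)^{-1/2} \to 1$ as $w \to 0$. A compactness argument upgrades this to $\sin\phi \asymp_{\Omega} 1$ on all of $\Omega$, whence $\imag(z) \asymp_{\Omega} \rho = |z+1|$ and $1/\rho \asymp_{\Omega} \imag(\tilde z)$. Combined with the first paragraph, for $z \in \Omega$ sufficiently close to $-1$ one obtains
\[
\imag(z)^{-1} \asymp \imag(\tilde z) \asymp \log(1/|w|), \qquad |\theta(z)|^4 \asymp \log^2(1/|w|) \cdot |w|^{1/2},
\]
which is the \emph{small $\Omega$} statement.

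To deduce the general statement on any compact $\Omega \subset \overline{\mathcal{D}} \cup S\overline{\mathcal{D}}$ containing $-1$ and bounded away from $0$, I split $\Omega$ into a small neighborhood of $-1$, where the estimates above apply (and $\log(e+1/|w|) \asymp \log(1/|w|)$ since $|w|$ is small), and its complement in $\Omega$, on which $|w|$ and $\imag(z)$ are both bounded above and below by positive constants by compactness (using that $J$ does not vanish outside the $\Gamma_\theta$-orbit of $-1$ and has no pole in $\Omega$, since $0$ is excluded). On that complement both sides of each claimed asymptotic are then bounded between positive constants, hence comparable. The one genuinely delicate point is the geometric tangency observation in the second paragraph, giving the uniform lower bound on $\sin\phi$; after that, everything reduces to bookkeeping with the modular transformation laws and standard $q$-expansions of the theta functions.
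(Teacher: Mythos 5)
Your proof is correct and follows essentially the same route as the paper's: pass to the local uniformizer $\tilde z = STz = -1/(z+1)$ at the cusp $-1$, use the modular transformation laws to express $J(z)$ and $\theta(z)^8/J(z)$ in terms of theta functions of $\tilde z$, read off the leading $q$-expansion as $\tilde z \to i\infty$, and combine this with the elementary geometric comparison between $\imag(z)$, $\imag(\tilde z)$, and $|\tilde z|$ valid on $\overline{\mathcal{D}} \cup S\overline{\mathcal{D}}$ near $-1$; the general compact $\Omega$ is then handled by compactness and non-vanishing of $J$ away from the cusp orbit. Your treatment of the tangency of $\overline{\mathcal{D}} \cup S\overline{\mathcal{D}}$ to the vertical line at $-1$ (giving $\sin\phi \to 1$) spells out the inequality \eqref{eq:asymptotic-relations-uniformizer-at-1} that the paper asserts more tersely, but the underlying idea is identical.
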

\begin{proof}
By continuity, it suffices to establish \eqref{eq:imag-in-terms-of-w} and \eqref{eq:theta-squraed-in-terms-of-w} in the case where $\Omega$ is sufficiently close to $-1$. Then, if $z \in \overline{\mathcal{D}} \cup S \overline{ \mathcal{D}}$ has $\imag(z) \leq 1/2$ and $\real(z) \leq 1/2$, the point $\tilde{z}:= STz = \frac{-1}{z+1}$ satisfies
\begin{equation}\label{eq:asymptotic-relations-uniformizer-at-1}
\frac{1}{2 \imag(z)} \leq \imag(\tilde{z}) = \frac{\imag(z)}{|z+1|^2} \leq \frac{1}{\imag(z)}, \qquad 
\frac{1}{\sqrt{2} \imag(z)} \leq \frac{1}{|z+1|} = |\tilde{z}| \leq \frac{1}{\imag(z)}.
\end{equation}
Additionally, we have $\tilde{z} \rightarrow i \infty$ as $z \rightarrow -1$, in the sense that $\text{Im}(z) \to \infty$ and $|\text{Im}(z)/\text{Re}(z)| \to \infty$ as $z \to -1.$ 

Indeed, if $z = -1 + \omega,$ then $\tilde{z} = \frac{-1}{\omega},$ and thus $\text{Re}(\tilde{z}) = - \frac{\text{Re}(\omega)}{|\omega|^2},$ while $\text{Im}(\tilde{z}) = \frac{\text{Im}(\omega)}{|\omega|^2}.$  This and \eqref{eq:asymptotic-relations-uniformizer-at-1} show that $|\tilde{z}|, \text{Im}(\tilde{z}) \to \infty$ as $z \to -1,$ while, as $\omega = z + 1 \in \overline{\mathcal{D}} \cup S \overline{\mathcal{D}},$ we have $\left|\frac{\text{Im}(\omega)}{\text{Re}(\omega)}\right| \to \infty,$ which finishes the claim. 

From the transformation rules of $\Theta_2,\Theta_3,\Theta_4$ and $J$ we get
\begin{align*}
J(z) &= J(TS \tilde{z}) = 16\frac{\Theta_3(TS \tilde{z})^8}{\Theta_2(TS \tilde{z})^4 \Theta_4(TS \tilde{z})^4} =-16 \frac{\Theta_2(\tilde{z})^8}{\Theta_4(\tilde{z})^4 \Theta_3(\tilde{z})^{4}}, \\
\frac{\theta(z)^8}{J(z)} &= \frac{\Theta_3(TS \tilde{z})^8}{J(TS \tilde{z})}= -( \tilde{z}/i)^{4} \frac{1}{16}\Theta_4(\tilde{z})^4 \Theta_3(\tilde{z})^{4}.
\end{align*}
Using that $\Theta_2(\tilde{z}) \sim 2 e^{\pi i \tilde{z}/4}$, $\Theta_3(\tilde{z}), \Theta_4(\tilde{z}) \sim 1$ as $ \tilde{z} \rightarrow i \infty$, we get $J(z) \sim 2^{12}e^{2 \pi i \tilde{z}}$, as $z \rightarrow -1$, from which the desired relations follow together with \eqref{eq:asymptotic-relations-uniformizer-at-1}.
\end{proof}
Besides Lemma \ref{lem:asymptotic-relatoins-near-1} we also need the following Lemma, which is similar to \cite[Lemma 6.1]{BRS}, but a bit more explicit.
\begin{lemma}\label{lem:pinelis}
For real numbers $h \in (0,1/e]$, $\delta \in (0, h]$, $\mu \in (-1, 0]$ and $b \geq 0$, define
\begin{equation}\label{eq:def-H-mu-b}
H_{\mu, b}(\delta) := \int_{0}^{h}{\frac{x^{\mu}}{(\delta^2 + x^2)^{1/2}}\log(1/x)^{b}dx}.
\end{equation}
If $b \geq 1$ and $\log(1/\delta)(\mu+1) \geq 1$, then
\begin{equation}\label{eq:upper-bound-H-mu-b}
H_{\mu, b}(\delta) \leq 2^{b+2} \Gamma(b+1) \log(1/\delta)^{b+1}(1/\delta)^{|\mu|}.
\end{equation}
\end{lemma}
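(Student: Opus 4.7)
The plan is to split the integral defining $H_{\mu,b}(\delta)$ at $x=\delta$ and estimate each piece separately, exploiting the hypothesis $L(\mu+1)\geq 1$ (where $L:=\log(1/\delta)$) precisely at the step where small denominators $1/(\mu+1)$ arise.

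For the inner piece $x\in[0,\delta]$, I would use the trivial bound $(\delta^{2}+x^{2})^{1/2}\geq \delta$ and rescale by $x=\delta u$ to obtain
\[
\int_{0}^{\delta}\frac{x^{\mu}}{(\delta^{2}+x^{2})^{1/2}}\log(1/x)^{b}\,dx\leq \delta^{\mu}\int_{0}^{1} u^{\mu}\bigl(L+\log(1/u)\bigr)^{b}\,du.
\]
Applying the convexity inequality $(a+c)^{b}\leq 2^{b-1}(a^{b}+c^{b})$ (valid for $a,c\geq 0$, $b\geq 1$), the right side splits into $L^{b}\int_{0}^{1}u^{\mu}du$ and $\int_{0}^{1}u^{\mu}\log(1/u)^{b}du$. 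These evaluate to $1/(\mu+1)$ and $\Gamma(b+1)/(\mu+1)^{b+1}$ respectively (the second one via the substitution $v=(\mu+1)\log(1/u)$ reducing it to the Gamma integral). The hypothesis $L(\mu+1)\geq 1$ now converts each factor of $1/(\mu+1)$ into one of $L$, giving a clean bound $\delta^{-|\mu|}\cdot 2^{b}\Gamma(b+1)L^{b+1}$ for this piece.

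For the outer piece $x\in[\delta,h]$, I would use $(\delta^{2}+x^{2})^{1/2}\geq x$ and the monotone bound $\log(1/x)^{b}\leq L^{b}$, reducing to $L^{b}\int_{\delta}^{h}x^{\mu-1}dx$. When $\mu=0$ the inner integral equals $\log(h/\delta)\leq L$. When $\mu<0$, writing $y:=|\mu|\log(h/\delta)$ yields
\[
\int_{\delta}^{h}x^{\mu-1}\,dx=\delta^{-|\mu|}\cdot\frac{1-e^{-y}}{y}\cdot \log(h/\delta),
\]
and the elementary inequality $(1-e^{-y})/y\leq 1$ gives the uniform estimate $\leq \delta^{-|\mu|}L$. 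Hence this piece contributes at most $\delta^{-|\mu|}L^{b+1}$, and summing with the previous piece (using $\Gamma(b+1)\geq\Gamma(2)=1$ for $b\geq 1$) finishes the proof, in fact with constant $2^{b+1}$ rather than the stated $2^{b+2}$.

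The only subtle point in the argument is precisely the outer piece when $|\mu|$ is close to $0$: the naive bound $\int_{\delta}^{h}x^{\mu-1}dx\leq 1/|\mu|$ blows up, and one has to notice the exponential cancellation $(1-e^{-y})/y\leq 1$ to get a bound that is uniform in $\mu$ and features the correct logarithmic dependence $\log(h/\delta)\leq L$. No other step is genuinely nontrivial — the remaining work is routine manipulation of Gamma integrals and the convexity inequality.
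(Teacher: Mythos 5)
Your proof is correct, and it departs from the paper's argument in a genuinely useful way. Both proofs split at $x=\delta$ and use $(\delta^2+x^2)^{1/2}\geq\delta$ on the inner piece and $\geq x$ on the outer piece, but the subsequent treatment of the inner piece differs. The paper changes variables $t=\log(1/x)$, arrives at the \emph{incomplete} Gamma function $\Gamma(b+1,\log(1/\delta)(\mu+1))$, and then invokes a nontrivial explicit bound of Pinelis \cite{Pinelis} to control it. You instead rescale $x=\delta u$ so the domain becomes $[0,1]$, split $(\log(1/\delta)+\log(1/u))^{b}$ by the convexity inequality $(a+c)^{b}\leq 2^{b-1}(a^{b}+c^{b})$, and then evaluate the resulting integrals exactly via the \emph{complete} Gamma function. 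This eliminates the external ingredient entirely and is fully self-contained; it even yields the slightly sharper constant $2^{b+1}$ instead of $2^{b+2}$. The only tradeoff is that the paper's route, via the incomplete Gamma function, is the one that generalizes most directly when one tries to refine the estimate (cf.\ the sharper results in \cite[Proposition~4.4]{Ramos-Sousa-perturbation}), whereas the convexity splitting loses a factor that would matter in such refinements; for the present lemma, though, your method is strictly simpler. Your handling of the outer piece via the explicit evaluation of $\int_{\delta}^{h}x^{\mu-1}\,dx$ and the inequality $(1-e^{-y})/y\leq 1$ is also correct and is a minor variant of the paper's bound $e^{|\mu|t}\leq(1/\delta)^{|\mu|}$, giving essentially the same estimate.
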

\begin{proof}
We split the integral as $ \int_{0}^{\delta} + \int_{\delta}^h$. On the first part we use $x^2 + \delta^2 \geq \delta^2$ and on the second we use $x^2 + \delta^2 \geq x^2$. On both parts, we change to the variable $t = \log(1/x)$ and write $H_{\mu,b}(\delta) \leq A + B$, where
\[
 A= \delta^{-1}(\mu+1)^{-b-1} \Gamma(b+1, \log(1/\delta)(\mu+1)), \qquad B = \int_{\log(1/h)}^{\log(1/\delta)}{t^{b}e^{|\mu|t}dt}
\]
and where $\Gamma(a,x) =\int_{x}^{\infty}{e^{-t}t^{a-1}dt}$ denotes the incomplete Gamma function. A result of Pinelis \cite[Theorem~1.1]{Pinelis} asserts that for all $a \geq 2$ and all $x > 0$, we have
\begin{equation}\label{eq:isoif-upper-bound}
\Gamma(a,x) \leq \frac{(x+c_{a})^{a}-x^{a}}{a c_a}e^{-x}, \quad \text{where} \quad c_a := \Gamma(a+1)^{1/(a-1)}.
\end{equation}
Applying this with $a = b+1$ and $x = \log(1/\delta)(\mu+1)$, we get
\[
A \leq (1/\delta)(1/\delta)^{-(\mu+1)} \log(1/\delta)^{b+1} \frac{\left(1 + \tfrac{c_{b+1}}{\log(1/\delta)(\mu+1)}\right)^{b+1}-1}{(b+1)c_{b+1}} \leq (1/\delta)^{|\mu|} \log(1/\delta)^{b+1} 2^{b+1}\Gamma(b+1),
\]
where we used the assumption $(\mu+1)\log(1/\delta) \geq 1$ and crude upper bounds to get the last inequality (we also used $1 \leq c_{b+1}$ for $b \geq 1$). To bound $B$, we use $\log(1/h) \geq 1$ and $e^{t|\mu|} \leq \log(1/\delta)^{|\mu|}$ for $t$ in the integration range and obtain
\[
B \leq (1/\delta)^{|\mu|} \frac{\log(1/\delta)^{b+1}-1}{(b+1)}.
\]
Hence the upper bound for $A$ is larger than that for $B$, and $H_{\mu,b}(\delta) \leq A + B$ implies \eqref{eq:upper-bound-H-mu-b}.
\end{proof}
Now that we have Lemmas \ref{lem:asymptotic-relatoins-near-1}, \ref{lem:pinelis} we can turn to the estimate of $F(\tau)$ in the fundamental domain.
\begin{proposition}\label{prop:bound-F-in-fd}
With notations and conventions as at the beginning of \S \ref{sec:estimates-for-basis-functions}, there exist constants $c_1, c_2, c_3 \geq 0$ such that for all $\tau \in \mathcal{D}$,
\begin{equation}\label{eq:upper-bound-F-in-fd}
|F(\tau)| = |F_k^{\epsilon}(\tau,r)|(1+r)^{\beta} \leq  \tilde{g}(\beta) e^{c_1 k + c_2 \beta + c_3}\Gamma(\beta/2-k+1)(1+\imag(\tau)^{-\beta/2-1}).
\end{equation}
Here, we recall that $\tilde{g}(\beta) = \max{(1, (\beta/2 \pi e)^{\beta/2})}$ and that we assume $\beta \geq 2k + 2$.
\end{proposition}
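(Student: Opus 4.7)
The plan is to estimate the contour integral representation \eqref{eq:standard-integral-representation-F} piece by piece. First apply \eqref{eq:universal-estimate-gaussian} to absorb the $(1 + r^\beta)$ factor into the integrand: $(1+r^\beta) |\varphi_r(z)| \leq 1 + g(\beta) \imag(z)^{-\beta/2}$. The task then reduces to bounding
\[
\int_{\Gamma} |\mathcal{K}(\tau,z)|\,(1 + g(\beta)\, \imag(z)^{-\beta/2})\,|dz|
\]
over the clockwise unit semicircle $\Gamma$ from $-1$ to $1$. I would split $\Gamma = \Gamma_- \cup \Gamma_0 \cup \Gamma_+$ into two small arcs near the cusps $\pm 1$ and a middle piece $\Gamma_0$ bounded away from both cusps.

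On $\Gamma_0$, $\imag(z)$ is bounded below and $\theta$, $J$, $J'$, $J_-$ are bounded above and, where needed, away from $0$, while the pole factor $(J(z)-J(\tau))^{-1}$ is controlled by noting that $J$ is real on the unit semicircle whereas $|\imag(J(\tau))|$ is bounded below for $\tau$ in compact portions of $\mathcal{D}$, with a separate argument as $\tau \to i\infty$ (where $|J(\tau)|\to\infty$ forces the desired lower bound automatically). Combined with the fact that $|\theta^{2k}(\tau) J(\tau)^{1-\nu_\epsilon}|$ admits a bound of the form $e^{c_1 k + c_3}(1+\imag(\tau)^{-\beta/2-1})$ on $\mathcal{D}$, the $\Gamma_0$ contribution accounts for the ``$1$''-term of the claimed estimate, multiplied by $\tilde g(\beta)$.

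The core of the proof is the analysis on the two near-cusp arcs, which, by the $z \mapsto -\overline{z}$ symmetry of the setup, reduces to $\Gamma_-$. Using the local uniformizer $\tilde z = STz$ as in the proof of Lemma \ref{lem:asymptotic-relatoins-near-1} and changing variables to $w = J(z)$ (so $J'(z)\,dz = dw$), one expresses the integrand in $w$-coordinates. By Lemma \ref{lem:asymptotic-relatoins-near-1}, $\imag(z)^{-1} \asymp \log(1/|w|)$ and $|\theta(z)|^{-2k} \asymp |w|^{-k/4}\log(1/|w|)^{-k}$; one further verifies from $J_-^2 = 1 - 64/J$ that $|J_-(z)|^{-1} \asymp |w|^{1/2}$ as $|w| \to 0$, which exactly cancels the extra exponent in the $\epsilon = -1$ case. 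Using $J(z)^{\nu_\epsilon - 1} = w^{\nu_\epsilon - 1}$, the total exponent of $|w|$ in the integrand collapses to $\mu_\epsilon$ uniformly in both signs, while the exponent of $\log(1/|w|)$ becomes $\beta/2 - k \geq 1$ by the hypothesis $\beta \geq 2k+2$. The resulting integrand therefore fits the template of Lemma \ref{lem:pinelis}, yielding the $\Gamma(\beta/2 - k + 1)$ factor; a careful choice of $\delta$ reflecting the closest approach of the contour image $\{w : z \in \Gamma_-\} \subset \R$ to $J(\tau)$, combined with $(1/\delta)^{|\mu_\epsilon|} \leq e^{c k}$ (using $|\mu_\epsilon| \leq 7/8$) and $\log(1/\delta) \lesssim \imag(\tau)$ as $\tau \to i\infty$, produces the factor $(1+\imag(\tau)^{-\beta/2-1})$ attached to $\Gamma(\beta/2 - k + 1)$.

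The main obstacle is bookkeeping of constants: every appearance of a power of $\theta$, of $J^{\pm(\nu_\epsilon -1)}$, and of an implicit constant in Lemma \ref{lem:asymptotic-relatoins-near-1} must be tracked so as to contribute only $e^{O(k)+O(\beta)}$, so that the three pieces sum to the claimed bound. The case where $\tau$ approaches the boundary cusps of $\mathcal{D}$ (so that $|J(\tau)|\to 0$) is handled using that $\theta^{2k}(\tau)$ decays to compensate $J(\tau)^{1-\nu_\epsilon}$, which again contributes only an exponential-in-$k$ factor; this is the most delicate step because it requires a uniform-in-$k$ analysis of the modular functions $\theta^{2k}/J^{\nu_\epsilon-1}$ on $\mathcal{D}$.
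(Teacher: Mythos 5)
Your overall strategy (split the semicircle near the cusps, change to $w = J(z)$ coordinates, invoke Lemma~\ref{lem:asymptotic-relatoins-near-1} for the local asymptotics, feed the result through Lemma~\ref{lem:pinelis}) is the right shape, and your bookkeeping of exponents — the exponent of $|w|$ collapsing to $\mu_\epsilon$ with $J_-^{-1}\asymp|w|^{1/2}$ absorbing the extra factor in the $\epsilon=-1$ case, the logarithm appearing with exponent $\beta/2-k\ge 1$ — is correct and matches the paper. But there is one missing idea that the rest of your proof cannot survive without: a \emph{contour shift} of the $w$-integral off the real axis.

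After the change of variables, the integration variable $w$ runs along a real interval $\subset[0,64]$, and $J(\tau)$ lies in $\overline{\H}=J(\overline{\mathcal D_{\mathrm{left}}})$. As $\tau$ approaches the left quarter-arc $\{|z|=1,\ -1<\operatorname{Re}(z)<0\}\subset\partial\mathcal D$, the value $J(\tau)$ tends to a point of $(0,64)$, so the pole $w=J(\tau)$ of the kernel collides with your real $w$-contour on the middle piece $\Gamma_0$. Your control of $1/(J(z)-J(\tau))$ on $\Gamma_0$ explicitly relies on ``$|\operatorname{Im}(J(\tau))|$ bounded below for $\tau$ in compact portions of $\mathcal D$, with a separate argument as $\tau\to i\infty$'': but the statement must be uniform over \emph{all} $\tau\in\mathcal D$, and neither case covers $\tau$ drifting toward the bounding arc. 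Likewise, your prescription that $\delta$ in Lemma~\ref{lem:pinelis} should be ``the closest approach of $\{w:z\in\Gamma_-\}\subset\R$ to $J(\tau)$'' makes $\delta$ collapse to $0$ in exactly this regime, at which point the factor $(1/\delta)^{|\mu_\epsilon|}$ in \eqref{eq:upper-bound-H-mu-b} is uncontrolled. The paper resolves this by replacing the real segment $[0,64]$ with the rectangular contour $\ell=\ell_1\cup\ell_2\cup\ell_3$ lowered into $\operatorname{Im}(w)\le 0$; since $\operatorname{Im}(J(\tau))\ge 0$ throughout $\mathcal D_{\mathrm{left}}$, one gets the \emph{uniform} lower bounds \eqref{eq:lower-bound-on-ell-1}--\eqref{eq:lower-bound-on-ell-3}, in particular $|J(\tau)-w|\ge h$ on the whole middle segment $\ell_2$ regardless of how close $\tau$ is to the arc, and on $\ell_1$ one may take $\delta=|J(\tau)|$, whose negative power $|J(\tau)|^{-\mu_\epsilon}$ then cancels against the $J(\tau)^{1-\nu_\epsilon}$ prefactor. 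Without this shift, the integral is a near-singular principal-value-type integral and the estimate does not close.

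A secondary, smaller issue: your proposed reduction to a single near-cusp arc via the reflection $z\mapsto -\overline z$ is not the mechanism the paper uses and is not obviously correct, because the kernel $\mathcal K_k^\epsilon(\tau,\cdot)$ is holomorphic, not invariant under conjugation in $z$. The paper instead reduces in $\tau$ (reflection $\tau\mapsto -\overline\tau$ applied to both sides of the \emph{inequality}) and then folds the $z$-contour by the genuine modular substitution $z\leftrightarrow -1/z$, which sends the quarter-circle from $i$ to $1$ to the quarter-circle from $-1$ to $i$ and simultaneously converts $\varphi_r(z)$ into $\psi_S(z)$ — that is exactly what makes the cocycle $\psi_S$ appear in the integrand. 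You should adopt that reduction rather than a conjugation in $z$.
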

\begin{proof}
We closely follow the proof of Proposition 6.1 in \cite{BRS} and the closely related proof of Lemma 4 in \cite{RV}  with a few adaptations, keeping closer track of the dependence on $k$ in the estimates. We start with two preliminary simplifications concerning the set of $\tau \in \mathcal{D}$ for which \eqref{eq:upper-bound-F-in-fd} has to be established.
\begin{enumerate}[(i)]
\item Since both sides of \eqref{eq:upper-bound-F-in-fd} are invariant under the reflection $\tau \mapsto - \overline{\tau}$ and continuous in $\tau$, we may assume that $\real(\tau) \in \mathcal{D}_{\text{left}}$, where
\[
\mathcal{D}_{\text{left}}:= \{ z \in \mathcal{D}\,:\, \real( z) < 0\}, \qquad  \mathcal{D}_{\text{right}}:= \{ z \in \mathcal{D}\,:\, \real( z) >0\}.
\]
\item It suffices to prove \eqref{eq:upper-bound-F-in-fd} for $\tau \in \mathcal{D}$ such that $|\tau-i| \geq 1/4$. Indeed, assuming \eqref{eq:upper-bound-F-in-fd} holds for such $\tau$, it follows for the remaining $\tau \in \mathcal{D}$  (possibly with slightly enlarged constants $c_i$) by applying the maximum modulus principle to to the disc $|\tau-i| \leq 1/4$ combined with the functional equation in the form $F(\tau) = \psi_S(\tau) +\epsilon F(-1/\tau)(\tau/i)^{-k}$. 

\end{enumerate}
Thus, assume henceforth that $\tau \in \mathcal{D}_{\text{left}}$ and that $|\tau-i| \geq 1/4$. We split the integral in \eqref{eq:standard-integral-representation-F} as $\int_{-1}^{i} + \int_{i}^1$ and change variables $z \leftrightarrow -1/z$ on the second piece, giving $F(\tau) =  \frac{1}{2}\int_{-1}^{i}{\mathcal{K}(\tau,z)\psi_S(z)dz}$. Next, we recall that $J|_{\mathcal{D}}$ is injective, that $J(\mathcal{D}_{\text{left}}) = \H$, $J(\mathcal{D}_{\text{right}}) = -\H$ and that $J$, restricted to the quarter circle from $-1$ to $i$, gives a smooth monotone bijection onto $[0,64]$, with $J(i) = 64$. Thus, changing variables $w= J(z)$, $dw = J'(z)dz$ and defining $t(w) :=J^{-1}(w)=z$, we obtain
\begin{align*}
F^{+}(\tau) &= \frac{\theta^{2k}(\tau)}{J(\tau)^{\nu_{+}-1}} \frac{1}{2\pi i }\int_{0}^{64}{ \frac{1}{J(\tau)-w}\frac{w^{\nu_{+}-1}}{\theta^{2k}(t(w))} \psi_S(t(w)) dw},\\
F^{-}(\tau) &= \frac{\theta^{2k}(\tau)}{J(\tau)^{\nu_{-}-1} J_{-}(\tau)} \frac{1}{2\pi i }\int_{0}^{64}{ \frac{1}{J(\tau)-w}\frac{w^{\nu_{-}-1}}{\theta^{2k}(t(w))J_{-}(t(w))} \psi_S(t(w)) dw},
\end{align*}
where we have re-included the dependence on the sign $\epsilon$ in the notation. The difficulty is to control the term $1/(J(\tau)-w)$, which goes to infinity as $\tau$ approaches the left-quarter circle joining $-1$ to $i$. We therefore change the $w$-contour from $[0,64]$ to rectangular path $\ell = \ell_1 \cup \ell_2 \cup \ell_3$, where, for some $h \in (0,1/e]$, to be determined,
\[
\ell_1 = i[-h,0], \qquad \ell_2 = -ih + [0,64], \qquad \ell_3 = 64 +i[-h,0].
\]
On these line segments the following estimates hold
\begin{align}
&|J(\tau)-w|^2 =  |J(\tau)|^2 +|w|^2 -2\real(J(\tau) \overline{w}) \geq |J(\tau)|^2 + |w|^2  &\text{for }w \in \ell_1, \label{eq:lower-bound-on-ell-1}\\
&|J(\tau)-w|^2 = (\real(J(\tau))-\real(w))^2 + (\imag(J(\tau))+h)^2 \geq h^2 & \text{for } w \in \ell_2, \label{eq:lower-bound-on-ell-2}\\
&|J(\tau)-w|  \geq c_0  &\text{for } w \in \ell_3, \label{eq:lower-bound-on-ell-3}
\end{align}
where $c_0 >0$ is an absolute constant, whose existence follows from our assumption that $|\tau-i| \geq 1/4$, making $J(\tau)$ bounded away from $64$. Let $R = R_{\ell}$ denote the rectangle bounded by the $\ell_i$ and $[0,64]$. Note that $\{t(w)\,:\, w  \in R\}$ is a compact subset in the closure of $\mathcal{D}_{\text{right}}$ and that
\[
S \mathcal{D}_{\text{right}} = \{z \in \H\,:\, |z|<1,|z+1/2|<1/2\}.
\]
Using this observation together with the general estimate \eqref{eq:universal-estimate-psi_S} we bound
\[
|\psi_S(t(w))| \leq C_1^{k}C_2^{\beta}\tilde{g}(\beta)(1 + \imag(t(w))^{-\beta/2}), \quad w \in R_{\ell},
\]
with some constants $C_1,C_2 \geq 1$. We deduce that
\begin{align*}
|F^{+}(\tau)| &\leq  \tilde{g}(\beta) C_1^k C_2^{\beta}\left| \frac{ \theta(\tau)^{2k}}{J(\tau)^{\nu_{+}-1}}\right|  \sum_{j=1}^{3}{\int_{\ell_j}{\frac{1}{|J(\tau)-w|}\frac{|w^{\nu_{+}-1}|}{|\theta^{2k}(t(w))|}(1+ \imag(t(w))^{-\beta/2})|dw|     } }, \\
|F^{-}(\tau)| &\leq  \tilde{g}(\beta) C_1^k C_2^{\beta}\left| \frac{ J_{-}(\tau) \theta(\tau)^{2k}}{J(\tau)^{\nu_{-}-1}}\right|  \sum_{j=1}^{3}{ \int_{\ell_j}{\frac{1}{|J(\tau)-w|}\frac{|w^{\nu_{-}-1}|}{|\theta^{2k}(t(w))|}\frac{|w|^{1/2}}{|w-64|^{1/2}} (1+\imag(t(w))^{-\beta/2}) |dw|     } } ,
\end{align*}
where we used $(J_{-})^2 = 1-64/J$ to write $\tfrac{1}{|J_{-}(t(w))|} = \tfrac{|w|^{1/2}}{|w-64|^{1/2}}$ in $F_{-}(\tau)$.
Employing the asymptotic relations \eqref{eq:imag-in-terms-of-w} $\imag(t(w))^{-1} \asymp_{\ell} \log(e + |w|^{-1})$ and \eqref{eq:theta-squraed-in-terms-of-w} $|\theta(t(w))|^2 \asymp_{\ell} |w|^{1/4} \log(e +1/|w|)$ from Lemma \ref{lem:asymptotic-relatoins-near-1}, we can estimate the terms
\begin{align*}
\frac{|w^{\nu_{+}-1}|}{|\theta^{2k}(t(w))|} (1+\imag(t(w))^{-\beta/2}) &\leq C_3^{k}C_4^{\beta}|w|^{\nu_{+}-1-k/4}\log(e + 1/|w|)^{\beta/2-k},\\
\frac{|w^{\nu_{-}-1}|}{|\theta^{2k}(t(w))|}\frac{|w|^{1/2}}{|w-64|^{1/2}} (1+ \imag(t(w))^{-\beta/2}) &\leq C_3^{k}C_4^{\beta} \frac{|w|^{\nu_{-}-1/2-k/4}}{|w-64|^{1/2}}\log(e + 1/|w|)^{\beta/2-k},
\end{align*}
for all $w \in \ell$. We now choose $h$ sufficiently small so that, for $w \in \ell_1$, we can 
 replace $\log(e + 1/|w|)$ by $\log(1/|w|)$ in these  estimates. Inserting them into the estimates for $F^{+}(\tau), F^{-}(\tau)$ from before and using \eqref{eq:lower-bound-on-ell-1}, \eqref{eq:lower-bound-on-ell-2}, \eqref{eq:lower-bound-on-ell-3} as well as integrability of $|w-64|^{-1/2}$ on $\ell_3$, we obtain
\begin{align}
|F^{+}(\tau)| &\leq \tilde{g}(\beta)C_5^k C_6^{\beta} \left| \frac{ \theta(\tau)^{8}}{J(\tau)} \right|^{k/4} |J(\tau)|^{-\mu_{+}}  \left( H_{\mu_{+},\beta/2-k}(|J(\tau)|) +  C_7^k C_8^{\beta} \right), \label{eq:final-bound-F-plus}\\
|F^{-}(\tau)| &\leq \tilde{g}(\beta) C_5^k C_6^{\beta} \frac{|J_{-}(\tau)|}{|J(\tau)|^{1/2}}\left| \frac{ \theta(\tau)^{8}}{J(\tau)} \right|^{k/4} |J(\tau)|^{-\mu_{-}}  \left( H_{\mu_{-},\beta/2-k}(|J(\tau)|) + C_7^k C_8^{\beta} \right),  \label{eq:final-bound-F-minus}
\end{align}
where $H_{\mu,\beta/2-k}$ is the elementary integral defined in  \eqref{eq:def-H-mu-b}. So far we did not make further assumptions on where the point $\tau \in \mathcal{D}_{\text{left}}$ is (besides the standing assumption $|\tau-i| \geq 1/4$). We now consider separately the cases where the point $\tau$ is close to the cusp $-1$  and bounded away from it. 

Precisely, we fix $y_0 \in (0,1]$ such that, if $\imag(\tau) \leq y_0$, then
\begin{equation}\label{eq:making-pinelis-applicable}
\log(1/|J(\tau)|)(\mu_{\epsilon} + 1) \geq \log(1/|J(\tau)|)(-7/8+1) \geq 1.
\end{equation}
For such $\tau$ we also have $|\theta^8(\tau)/J(\tau)| \lesssim \imag(\tau)^{-4}$ and $|J_{-}(\tau)|/|J(\tau)|^{1/2} \lesssim 1$ with implied constants depending at most on $y_0$. Because of \eqref{eq:making-pinelis-applicable} we can now apply Lemma \ref{lem:pinelis} with $b= \beta/2-k \geq 1$, $\mu = \mu_{\epsilon}$, $\delta = |J(\tau)|$, giving
\begin{align*}
|F^{\epsilon}(\tau)| &\leq \tilde{g}(\beta) C_9^{k}C_{10}^{\beta} \imag(\tau)^{-k}|J(\tau)|^{- \mu_{\epsilon}} \left(2^{b+2}\Gamma(b+1)\log(1/|J(\tau)|)^{b+1} |J(\tau)|^{\mu_{\epsilon}} +  C_7^k C_8^{\beta} \right) .\\
&\leq \tilde{g}(\beta) C_9^{k}C_{10}^{\beta} \left(2^{b+2}\Gamma(\beta/2-k+1)C_{11}^{b} \imag(\tau)^{-\beta/2-1} + \imag(\tau)^{-k}|J(\tau)|^{- \mu_{\epsilon}}C_7^{k}C_8^{\beta} \right),
\end{align*}
where we used $\log(1/|J(\tau)|) \lesssim \imag(\tau)^{-1}$. Using $|J(\tau)|^{-\mu_{\epsilon}} \leq 1$ and $\imag(\tau)^{-k} \leq \imag(\tau)^{-\beta/2-1}$ for $\imag(\tau) \leq y_0$, we can bring this into desired form \eqref{eq:upper-bound-F-in-fd}.

Now we consider points $\tau \in \mathcal{D}_{\text{left}}$ satisfying $\imag(\tau) \geq y_0$. Then $|\theta(\tau)^8/J(\tau)|, |J_{-}(\tau)|/|J(\tau)|^{1/2} \lesssim 1$. We estimate $H_{\mu_{\epsilon},b}(|J(\tau)|)$ similarly as the quantity $A$ in the proof of Lemma \ref{lem:pinelis}, namely by
\[
H_{\mu_{\epsilon},b}(|J(\tau)|) \leq \frac{1}{|J(\tau)|}(\mu_{\epsilon} + 1)^{-b-1} \int_{\log(1/h)(\mu_{\epsilon}+1)}^{\infty}{e^{-t}t^{b}dt} \leq \frac{8^{b+1}}{|J(\tau)|} \Gamma(b+1).
\]
Since $1/|J(\tau)| \lesssim 1$, this also gives an estimate of the shape \eqref{eq:upper-bound-F-in-fd} in the region $\imag(\tau) \geq y_0$. By taking the maximum of the estimates in the regions $\imag(\tau) \leq y_0$ and $\imag(\tau) \geq y_0$, we finish the proof.
\end{proof} 

\subsubsection{Cocycle estimates}\label{sec:cocycle-estimates}
Now that we have an estimate of $F(\tau)$ for $\tau \in \mathcal{D}$, the fundamental domain, we wish to derive from it bounds in the entire upper half-plane, by repeatedly applying the functional equations. For this, we closely follow the approach in \cite{RV}.

We start with a few preliminaries in the spirit of geometric group theory. Given real numbers $a, b$ with $a< b$ we write
\[
D(a,b) := \{z \in \H\,:\, |z-(a+b)/2| < (b-a)/2 \},
\]
for the open half-disc with midpoint $(a+b)/2$, bounded by the hyperbolic geodesic joining $a$ and $b$. Given a third point $p \in (a,b)$, we define
\[
\Delta(a,p, b) := D(a, b) \setminus \overline{D(a,p) \cup D(p,b)},
\]
which is a hyperbolic triangle with vertices $a, p, b$. We will use the same notation, i.e. $\Delta(a,p,b),$ to denote the hyperbolic triangle with vertices $a, p, b$ in the additional case of $a,b < p < +\infty$. 

If  $p = \infty \in \P^1(\R) = \partial \H$, then we define
\[
\Delta(a, \infty,b) := \{z \in \H \,:\, \real(z) \in (a, b) \} \setminus \overline{D(a,b)}.
\]
Note that $ \Delta(-1, \infty,1) = \mathcal{D}$ is the fundamental domain for $\Gamma_{\theta} \backslash \H$ we have been using.  If $\Delta  = \Delta(a,p,b)$ has real vertices satisfying either $a < p< b < -1$ or $1 < a< p < b$, i.e. if $\Delta$ is disjoint form the vertical strip $\real(z) \in (-1,1)$), then the triangle $S \Delta = \Delta(-1/a, -1/p, -1/b) \subset D(-1,-1)$ has diameter bounded by
\begin{equation}\label{eq:diameter-comparison}
\diam(S \Delta) = (-1/b)-(-1/a) = \frac{b-a}{ab} = \frac{b-a}{1+(ba-1)} = \frac{\diam(\Delta)}{1+(ba-1)} \leq   \frac{\diam(\Delta)}{1+\diam(\Delta)},
\end{equation}
since 
\[
ba-1 = b-a + ba-1+a-b = \diam(\Delta) + (b+1)(a-1) \geq \diam(\Delta).
\]
We will be interested in the set  $\mathcal{M} \subset \Gamma_{\theta}$ consisting of all elements $M \in \Gamma_{\theta}$ of the form
\begin{equation}\label{eq:general-element-in-M}
M = ST^{2m_n}ST^{m_{n-1}} \cdots ST^{2m_0}, \quad  \quad m_1, \dots, m_{n} \in \Z \setminus \{0 \}, \qquad m_0 \in \Z, \qquad n \geq 0.
\end{equation}
Thus, $M = ST^{2m_0}$ if $n =0$, which then could equal $S$. Let us compute the image of the fundamental domain $\Delta(-1, \infty, 1)$ under $M$ and analyze the diameter of the resulting triangle. We start with
\[
ST^{m_0}\Delta(-1, \infty,1) = S(2m_0-1, \infty, 2m_0 + 1) = \Delta(\tfrac{-1}{2m_0-1}, 0, \tfrac{-1}{2m_0 + 1} ) =: \Delta_0
\]
(which is $\Delta(-1, 0, 1)$ in the case $m_0 = 0$). The next element $T^{2m_1}$ maps $\Delta_0$ outside the vertical strip $\real(z) \in (-1,1)$ without changing its diameter and the inversion $S$ then maps $T^{2m_1}\Delta_0$ back into $D(-1,1)$ giving a triangle $\Delta_1 := ST^{2m_1} \Delta_0$ satisfying $\diam(\Delta_1) \leq \frac{\diam(\Delta_0)}{1+\diam(\Delta_0)}$ by \eqref{eq:diameter-comparison}. We define $\Delta_j := ST^{2m_j}\Delta_{j-1}$, so that $\Delta_n =M \Delta(-1, \infty,1)$. By induction, we see that $\diam(\Delta_j) \leq \frac{2}{2j-1}$ for $1 \leq j \leq n$. This amounts to the computation with \eqref{eq:diameter-comparison} using that $t \mapsto \tfrac{t}{1+t}$ is non-decreasing on $(0,+\infty)$.

Now let us consider a point $\tau \in D(-1,1)$. Let $M \in \Gamma_{\theta}$ and $z \in \overline{\mathcal{D}}$ be such that $Mz = \tau$. Then we must have $M \in \mathcal{M}$ for otherwise, $M = T^{2m'}M'$ with $m' \neq 0$ and $M' \in \mathcal{M}$ and so $\tau =Mz = 2m' + M'z$ but $M'z \in \overline{D(-1,1)}$ by what we have seen above, a contradiction since $m' \neq 0$. 

So let us write $M = ST^{2m_n} \cdots ST^{2m_1} ST^{2m_0}$ as in \eqref{eq:general-element-in-M}. We wish to relate $F(\tau)$ to $F(z)$ using the functional equations repeatedly. For this, we put $\gamma_j = T^{2m_j} ST^{2m_{j-1}}\cdots ST^{2m_0}$ for $j \geq 0$ (thus $\gamma_0= T^{2m_0}$) and write
\begin{align}
F(\tau) &= F(Mz) = (F|M)(z) j_{\theta,k}(M,z) = \left((F|M)(z)-F(z) + F(z) \right) j_{\theta,k}(M,z) \notag \\
&= \left( F(z)-\psi_M(z) \right)j_{\theta,k}(M,z) = \Big( F(z)- \sum_{j=0}^{n}{(\psi_S|\gamma_j)(z)} \Big)j_{\theta,k}(M,z). \label{eq:F-of-tau-in-terms-of-F-of-z}
\end{align}
Here, we applied cocycle property $\psi_{AB} = \psi_A|B + \psi_B$ repeatedly, combined with $\psi_{T^{2m}} = 0$, giving
\begin{equation}\label{eq:applying-cocycle-repeatedly}
\psi_M = \psi_{ST^{2m_n} \cdots ST^{2m_0}} = \sum_{j=0}^{n}{\psi_S|\gamma_j}.
\end{equation}
This can also be proved via induction. Now we use 
\[
|j_{\theta,k}(M,z)| = \left( \frac{\imag(z)}{\imag(Mz)} \right)^{k/2} = \left( \frac{\imag(z)}{\imag(\tau)} \right)^{k/2}, \quad |j_{\theta,k}(\gamma_j, z)^{-1}| = \left( \frac{\imag(\gamma_j z)}{\imag(z)}\right)^{k/2}
\]
and the triangle inequality to get
\begin{equation}\label{eq:intermediate-estimate-cocycle-1}
|F(\tau)| \leq |F(z)|\imag(z)^{k/2} \imag(\tau)^{-k/2} + \sum_{j=0}^{n}{|\psi_S(\gamma_j z)|\imag(\gamma_j z)^{k/2} \imag(z)^{-k/2} }.
\end{equation}
From our preliminary remarks on how the elements of $\mathcal{M}$ act on $\mathcal{D}$, we know that
\begin{enumerate}[(i)]
\item $|\gamma_j z| \geq 1$ for $0 \leq j \leq n$,
\item $\imag(\gamma_j z) \leq \frac{1}{2j-1}$ for $1 \leq j \leq n$ and $\imag(S \gamma_j z) \leq \frac{1}{2j+1}$ for $0 \leq j \leq n$,
\item consequently, $\imag(\tau) = \imag(Mz) = \imag(S \gamma_nz) \leq  \frac{1}{2n+1}$,
\item $\imag(z) \geq \imag(\gamma_jz) \geq \imag(S \gamma_j z) \geq \imag(\tau)$ for $0 \leq j \leq n$.
\end{enumerate}
Let us also note that, with $(c,d)$ denoting the bottom row of $M$, we have
\begin{equation}\label{eq:imag-comparision}
\imag(\tau)\imag(z) = \imag(Mz)\imag(z) = \frac{\imag(z)^2}{|c z +d|^2} = \frac{1}{c^2 +  ((\real(z)c + d)/\imag(z))^2} \leq \frac{1}{c^2} \leq 1,
\end{equation}
because $c \neq 0$, as otherwise, the element $M$ would act like a power of $T^2$ and thus not map $\mathcal{D}$ into $D(-1,1)$, contradicting what we saw above. Having assembled all of these facts, we can now estimate the terms $|\psi_S(\gamma_jz)|$ appearing in \eqref{eq:intermediate-estimate-cocycle-1}. By properties (i) and (iv) we have, for $0 \leq j \leq n$,
\begin{align*}
|\psi_S(\gamma_jz)| &=(1+r^{\beta})|e^{\pi i (\gamma_j z)r^2}- \epsilon(\gamma_j z/i)^{-k}e^{\pi i (S \gamma_j z) r^2}|\\
					&\leq 1 + |\gamma_j z|^{-k} + g(\beta)\imag(\gamma_j z)^{-\beta/2} + g(\beta)|\gamma_j z|^{-k}\imag(S \gamma_j z)^{-\beta/2} \\
					&\leq 2 + 2g(\beta) \imag(\tau)^{-\beta/2} \leq 2 \tilde{g}(\beta) \imag(\tau)^{-\beta/2},
\end{align*}
using here also  $\imag(\tau) \leq 1$. We insert this bound for $\psi_S(\gamma_jz)$ back into \eqref{eq:intermediate-estimate-cocycle-1} and obtain
\begin{align}
|F(\tau)| &\leq |F(z)|\imag(z)^{k/2} \imag(\tau)^{-k/2} + \imag(\tau)^{-k/2} \sum_{j=0}^{n}{2 \tilde{g}(\beta) \imag(\tau)^{-\beta/2}\imag(\gamma_j z)^{k/2}  } \notag\\
		  &= |F(z)|\imag(z)^{k/2} \imag(\tau)^{-k/2} +  2\tilde{g}(\beta) \imag(\tau)^{-\beta/2-k/2}\Big(\imag(z)^{k/2} +\sum_{j=1}^{n}{\imag(\gamma_j z)^{k/2}}  \Big) \notag \\
		  &\leq |F(z)|\imag(\tau)^{-k} +  2\tilde{g}(\beta) \imag(\tau)^{-\beta/2-k/2}\left( \imag(\tau)^{-k/2} + \imag(\tau)^{-1} \right), \label{eq:intermediate-estimate-cocycle-2}
\end{align}
where we used
\begin{itemize}
\item property (ii) from above to bound $\imag(\gamma_j z)^{k/2} \leq (2j-1)^{-k/2} \leq 1$ for all $j \in \{1, 2, \dots, n \}$,
\item its corollary (iii) in the form $n \leq 2n+1 \leq \imag(\tau)^{-1}$, and
\item inequality \eqref{eq:imag-comparision} (implying $\imag(z)^{k/2} \leq \imag(\tau)^{-k/2}$)
\end{itemize}
By Proposition \ref{prop:bound-F-in-fd} and since $\imag(z) \geq \imag(\tau)$, we have 
\[
|F(z)| \leq \tilde{g}(\beta)h(\beta) (1+\imag(z)^{-\beta/2-1}) \leq \tilde{g}(\beta) h(\beta) (1+\imag(\tau)^{-\beta/2-1}),
\]
where  we abbreviated by \[
h(\beta) =e^{c_1k + c_2 \beta+c_3}\Gamma(\beta/2-k+1)
\]
 the constant from \eqref{eq:upper-bound-F-in-fd}. Inserting this into \eqref{eq:intermediate-estimate-cocycle-2} we obtain $|F(\tau)| \leq 6\tilde{g}(\beta)h(\beta) \imag(\tau)^{-\beta/2-k-1}$.
From Proposition \ref{prop:bound-F-in-fd} and because $F$ is $2$-periodic, we deduce that the estimate
\begin{equation}\label{eq:bound-in-all-of-H}
|F(\tau)| \leq 6\tilde{g}(\beta)h(\beta) \left(1 + \imag(\tau)^{-\beta/2-k-1} \right)
\end{equation}
also holds when $\tau \notin D(-1,1)$, and hence in the entire upper half-plane. 
\subsubsection{Conclusion}\label{sec:conclusion}
We are now ready to prove the estimate \eqref{eq:final-bound-bkn-in-thmA} stated in part (ii) of Theorem A. We start by  recalling from \eqref{eq:def-bk-eps} that for any $y >0$, \[
(1+r^{\beta})b_{k,n}^{-\epsilon}(r) = \frac{1}{2}\int_{iy-1}^{iy+1}{(1+r^{\beta})F_k^{\epsilon}(\tau, r)e^{- \pi i n \tau}d\tau}.
\]
Hence, by the triangle inequality and by the estimate \eqref{eq:bound-in-all-of-H},
\begin{equation}\label{eq:bound-bkn-after-triangle-inequality}
|(1+r^{\beta})b_{k,n}^{-\epsilon}(r)| \leq 6\tilde{g}(\beta)h(\beta) \left( 1 + y^{-\beta/2-k-1} \right)e^{\pi n y}.
\end{equation}
If $n = 0$, we can let $y \rightarrow \infty$ and deduce
\[
\sup_{r \geq 0}{|(1+r^{\beta})b_{k,0}^{-\epsilon}(r)|} \leq 6\tilde{g}(\beta)h(\beta).
\]
For $n \geq 1$, we specialize \eqref{eq:bound-bkn-after-triangle-inequality} to $y = \tfrac{\beta}{2 \pi } \tfrac{1}{n}$ and obtain
\[
\sup_{r \geq 0}{|(1+r^{\beta})b_{k,n}^{-\epsilon}(r)|} \leq 6\tilde{g}(\beta)h(\beta)\left(1 + n^{\beta/2 + k+1}( 2\pi / \beta)^{\beta/2 + k + 1} \right)e^{\beta/2}.
\]
Recalling the definition of $h(\beta)$, this gives the bound \eqref{eq:final-bound-bkn-in-thmA} stated in part (ii) of Theorem A. 
%

\subsection{Proof of part (iii) in Theorem A}\label{sec:decay-basis-functions}

Finally, in this subsection we employ the basic methods in \cite[Section~4.1]{Ramos-Sousa-perturbation} to prove that the basis functions $b_{k,n}^{\epsilon}(r)$ satisfy additionally some 
\emph{exponential decay} with respect to $r$. That is, we prove the bound \eqref{eq:final-pointwise-bound-bkn-in-thmA} stated in part (iii) of Theorem \ref{thm:thmA} 

To start, note that an immediate consequence of the estimate \eqref{eq:final-bound-bkn-in-thmA} stated in part (ii) of Theorem A (proved in the previous subsection) together with Stirling's approximation for the Gamma function, is the estimate 
\begin{equation}\label{eq:first-bound-prior}
\sup_{r \ge 0} |(1+r^{\beta}) b_{k,n}^{\epsilon}(r)|\le C_0 C_1^{k} C_2^{\beta} (1+n)^{\beta/2 + k +1} \beta^{\beta},
\end{equation}
which is valid whenever $k \ge \frac{1}{2}, \, \beta \ge 2k+2$ and $n \ge 0.$ Here, $C_0,C_1,C_2$ are (possibly large) absolute constants that depend on none of the other parameters in the inequality. The idea is then to optimize this inequality with respect to the condition $\beta \ge 2k+2.$ We first deduce from \eqref{eq:first-bound-prior} that, for all $r \geq 1$, we have
\begin{equation}\label{eq:third-bound-prior}
|b_{k,n}^{\epsilon}(r)| \le C_0 C_1^k  (n+1)^{k+1}\exp\left(\beta \log(C_2) +(\beta/2)\log(1+n) + \beta \log (\beta) - \beta \log(r) \right). 
\end{equation}
Choose a parameter $C>2C_2$ and let $\beta_0 = \beta_0(r,n,C) = \frac{r}{C \sqrt{n+1}}$. Then, if $r \geq C \sqrt{n+1}(2k+2)$ (implying $r \geq 1$ and $\beta_0 \geq 2k+2$), a quick computation using   \eqref{eq:third-bound-prior} with $\beta = \beta_0$ shows the estimate

\begin{equation}\label{eq:final-bound-large-r}
|b_{k,n}^{\epsilon}(r)| \le C_0 C_1^k (n+1)^{k+1} \exp\left( \frac{r}{C\sqrt{n+1}} \log(C_2/C)\right) \leq  C_0 C_1^k (n+1)^{k+1} e^{-c_1 r/\sqrt{n+1}}
\end{equation}
where $c_1$ is a constant; any $c_1  \in (0, \frac{\log(2)}{2C_2}]$ is admissible, by our choice of $C$. To cover the remaining range $0 \leq r \leq C \sqrt{n+1}(2k+2)$, we sketch how to obtain bound for $\sup_{r \in \R}{|b_{k,n}^{\epsilon}(r)|}$, with an unspecified (possibly large) dependency on $k$. To do so, we estimate the integral $H_{\mu,b}(\delta)$ from Lemma \ref{lem:pinelis} in the case $b = -k/2 < 0$ as follows. We change variables $x = \delta s$ in that integral and write
\begin{align*}
|H_{\mu,b}(\delta)| & \le \delta^{\mu} \int_0^{1/(\delta e)} \frac{s^{\mu} \log^{b}(1/(\delta s))}{\sqrt{1+s^2}} \, ds \cr 
		    & = \delta^{\mu} \log^b(1/\delta) \int_0^{\delta^{-1/2}} \frac{s^{\mu} \left(1 - \frac{\log(s)}{\log(1/\delta)}\right)^b}{\sqrt{1+s^2}} \, ds + \delta^{\mu} \log^b(1/\delta) \int_{(1/\delta)^{1/2}}^{1/(\delta e)} \frac{s^{\mu} \left(1 - \frac{\log(s)}{\log(1/\delta)}\right)^b}{\sqrt{1+s^2}} \, ds\cr
		    & \le 2^{-b} \delta^{\mu} \log^b(1/\delta) \int_0^{(1/\delta)^{1/2}} \frac{s^{\mu}}{\sqrt{1+s^2}} \, ds + \delta^{\mu} \int_{\delta^{-1/2}}^{1/(\delta e)} s^{\mu - 1} \, ds. \cr
\end{align*}
From here, basic estimates imply that $|H_{\mu,b}(\delta)| \le C_{b,\mu} \delta^{\mu} \log^{b+1}(1/\delta)$, for some constant $C_{b,\mu}>0$ depending only on $b, \mu$. Repeating the same strategy as in Proposition \ref{prop:bound-F-in-fd} (compare with \eqref{eq:final-bound-F-plus}, \eqref{eq:final-bound-F-minus} and set $b = -k/2$) we obtain that 
\[
|F^{\epsilon}_k(\tau,r)|  \lesssim_{k, \epsilon} 1+\imag(\tau)^{-1}, \quad \tau \in \mathcal{D}.
\]
Also repeating the analysis in \S \ref{sec:cocycle-estimates} and \S \ref{sec:conclusion}, we obtain that 
\begin{equation}\label{eq:uniform-bad-k}
\sup_{r \geq 0}{|b_{k,n}^{\epsilon}(r)|} \lesssim_{k, \epsilon}  (1+n)^{k+1}.
\end{equation}
Gathering \eqref{eq:final-bound-large-r} and \eqref{eq:uniform-bad-k}, we obtain the bound \eqref{eq:final-pointwise-bound-bkn-in-thmA} stated in part (iii) of Theorem \ref{thm:thmA}, which we copy for convenience:
\[
|b_{k,n}^{\epsilon}(r)| \le C_k (n+1)^{k+1} e^{-c_1|r|/\sqrt{n+1}}.
\]
Here, $C_k$ is an absolute constant, depending \emph{only} on $k$, and the above estimate holds for all $n \in \N_0$ all $r \in \R$, all $k \geq 1/2$ and all $\epsilon \in \{ \pm 1\}$.
%

\begin{remark-non} The above bound is to be compared directly with \cite[Theorem~1.5]{Ramos-Sousa-perturbation}. It yields a slightly worse bound in the case $k=1/2$, in which the authors of \cite{Ramos-Sousa-perturbation} obtain a growth bound $n^{1/4} \log(1+n)^{3/2}$ on the parameter $n$, whereas our bound gives $n^{3/2}$. This is due, in part, to the bound in Lemma \ref{lem:pinelis}, which is not as sharp as the estimates in \cite[Proposition~4.4]{Ramos-Sousa-perturbation}, but also to the cocycle estimates in \S \ref{sec:cocycle-estimates}, 
which are also slightly loose in comparison to those after Proposition 4.4 in that paper. 

An improvement over these bounds is most probably possible, as well as an explicit estimate of the growth of the constants $C_k$, but a sharp growth bound seems even hard to correctly conjecture. We believe that a more careful analysis in light of the results in \cite{BRS} might lead to further improvements in our bounds (especially for small $k$), even though they quite probably do not lead to the best possible bounds. For that reason, and because we opted for a uniform, elementary treatment over all $k$,  we do not pursue that path in this manuscript. 
\end{remark-non}

\section{Perturbed interpolation formulas for radial functions}\label{sec:perturbation-radial}
In this section we prove Theorem \ref{thm:radial-perturbation-in-intro} by using parts (i) and (ii) of Theorem A. We start with some preliminaries on function spaces, which we will also use in \S \ref{sec:perturbation-sphere}. 

Let $d \ge 2.$ For every real $s \geq 1$ we define $m_s: \R^d \rightarrow [0, \infty)$ by $m_s(x) = 1+|x|^s$ (and $m_0(0):=1$). Then we define the vector space
\begin{equation}\label{eq:definition-Vs-space}
V^s(\R^d) = \left \lbrace f \in L^1(\R^d) \,:\, m_s f, m_s \hat{f} \in L^1(\R^d) \right \rbrace,
\end{equation}
and equip it with the norm
\begin{equation}\label{eq:definition-Vs-norm}
\|f\|_{V^s(\R^d)}= \|f m_s \|_{L^1(\R^d)} + \|\hat{f} m_s\|_{L^1(\R^d)}, 
\end{equation}
with respect to which $V^s(\R^d)$ is a Banach space. The space of Schwartz functions constitutes a dense subspace of $V^s(\R^d)$ and is the intersection of all Banach spaces $V^s(\R^d)$. For $f \in V^s(\R^d)$ we have that $f, \hat{f}$ admit unique representatives in $ C^s(\R^d)$ with $C^s$-norm controlled by $\|f\|_{V^s(\R^d)}$. We denote by $V_{\text{rad}}^s(\R^d)$ the subpsace of $V^s(\R^d)$ consisting of elements that are represented by measurable radial functions. It is closed and thus complete, since $V^s$-convergence implies convergence in $C^0(\R^d)$ for all $s \geq 1$. As before, $\mathcal{S}_{\text{rad}}(\R^d)$ is a dense subspace of $V_{\text{rad}}^s(\R^d)$ and is the intersection of all these spaces. In the following, we regard $d \geq 2$ as fixed and often abbreviate $V^s(\R^d)$ to $V^s$, $L^1(\R^d)$ to $L^1$, and sometimes drop the subscript ``rad", et cetera. 

Before  we turn to starting the proof of Theorem \ref{thm:radial-perturbation-in-intro}, we need one more general technical result. It proves most of the assertions about the space $V^s(\R^d)$ made above and is a generalization of a result in  \cite[\S~5.2.2]{Ramos-Sousa-uniqueness} to higher dimensions. We thank the anonymous referee for offering an improvement of an earlier version of the following proposition.  

\begin{proposition}\label{prop:decay}
Fix $d \geq 1$. For $s \geq 1$ and $f \in V^s(\R^d)$ we have $|f(x)|+|\widehat{f}(x)| \lesssim  |x|^{-\frac{s}{d+1}}$, for $|x| \geq 1$. Moreover, if $s \geq 2$, then $|\nabla f(x)|+|\nabla \widehat{f}(x)| \lesssim  |x|^{-\frac{s}{2(d+1)}}$, for $|x| \geq 1$.
\end{proposition}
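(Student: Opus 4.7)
The strategy is to turn the weighted $L^1$ integrability hypothesis into pointwise decay via a local ``reverse mean value'' estimate, after first upgrading $f$ to a bounded Lipschitz representative. Since $|\xi| \le 1 + |\xi|^s$ whenever $s \ge 1$, Fourier inversion and differentiation under the integral sign give $\|f\|_\infty + \|\nabla f\|_\infty \lesssim \|f\|_{V^s}$ (both $\hat f$ and $|\xi|^s \hat f$ are in $L^1$), so $f$ admits a bounded Lipschitz representative; by symmetry, the same bounds hold for $\hat f$. This is the only place where we use the Fourier side of the $V^s$ condition for the first claim.

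Now fix $x_0$ with $|x_0| \ge 1$ and set $M = |f(x_0)|$. By Lipschitz continuity, $|f(y)| \ge M/2$ throughout the ball $B(x_0,\rho)$ of radius $\rho := M/(2\|\nabla f\|_\infty)$. Provided $\rho \le |x_0|/2$, this ball lies inside $\{|y| \ge |x_0|/2\}$, and the weighted integrability bound yields
\[
\|f\|_{V^s} \;\ge\; \int_{B(x_0,\rho)} |y|^s |f(y)|\,dy \;\gtrsim\; |x_0|^s \, M \, \rho^d \;\gtrsim\; |x_0|^s \, M^{d+1},
\]
with implicit constants depending on $\|f\|_{V^s}$, which rearranges to $M \lesssim |x_0|^{-s/(d+1)}$. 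The complementary regime $\rho > |x_0|/2$ forces $M > \|\nabla f\|_\infty |x_0|$, hence $|x_0|$ bounded by a constant depending on $f$, and there the claim is trivial from boundedness of $f$. The same reasoning applied to $\hat f$ completes the first assertion.

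For the gradient bound when $s \ge 2$, the inequality $|\xi|^2 \le 1 + |\xi|^s$ upgrades the above to $\|\nabla^2 f\|_\infty \lesssim \|f\|_{V^s}$. A second-order Taylor expansion at $x$ in the direction $v = \nabla f(x)/|\nabla f(x)|$ gives, for every $t > 0$,
\[
|\nabla f(x)| \;\le\; \frac{2 \sup_{B(x,t)} |f|}{t} + \frac{t}{2}\|\nabla^2 f\|_\infty.
\]
Optimizing in $t$ produces the Landau--Kolmogorov-type bound $|\nabla f(x)| \lesssim \bigl(\|\nabla^2 f\|_\infty \, \sup_{B(x,t^*)} |f|\bigr)^{1/2}$, with $t^* = 2\bigl(\sup_{B(x,t^*)} |f|/\|\nabla^2 f\|_\infty\bigr)^{1/2}$. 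For $|x|$ sufficiently large, $t^* \le |x|/2$, so the first part of the proposition gives $\sup_{B(x,t^*)} |f| \lesssim |x|^{-s/(d+1)}$, which combined with the Taylor estimate yields $|\nabla f(x)| \lesssim |x|^{-s/(2(d+1))}$. A symmetric argument handles $\nabla \hat f$, and bounded $|x|$ is absorbed in the implicit constant. The main subtlety to verify, rather than a genuine obstacle, is the bookkeeping in both steps to check that the ``bad'' regimes, where the Lipschitz or Taylor ball threatens to overlap the origin, only occur on a bounded range of $x$ depending on $\|f\|_{V^s}$ and therefore do not affect the asymptotic rate.
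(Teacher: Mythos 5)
Your proposal is correct and takes essentially the same approach as the paper's proof: both use Fourier inversion to extract uniform bounds on $f$ and $\nabla f$, build a ball around $x$ of radius comparable to $|f(x)|$ on which $|f|$ stays above $|f(x)|/2$ by the Lipschitz estimate, and integrate against the weight $|y|^s$ to force $|f(x)|^{d+1}|x|^s \lesssim \|f\|_{V^s}$; the gradient bound via a second-order Taylor/Landau--Kolmogorov interpolation is also what the paper invokes (by reference to a sketch elsewhere). The only cosmetic difference is that the paper takes the ball radius $r_x = |f(x)|/(2\|f\|_{V^s})$, which is automatically $\le 1/2 \le |x|/2$ and thus dispenses with your separate "bad regime," whereas your choice $\rho = M/(2\|\nabla f\|_\infty)$ requires that extra case distinction.
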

\begin{proof} 
Fix a nonzero $f \in V^s(\R^d)$. Notice first that, by Fourier inversion, as $s \ge 1,$ one has $|f(x)|, |\nabla f(x)| \le \|f\|_{V^s}$ for all $x \in \R^d$. Now fix $x \in \R^d$ with $|x| \geq 1$. Define $r_x := |f(x)|/(2\|f\|_{V^s}) \leq 1/2$ and let $B_x := B_{r_x}(x)$ denote the ball centered at $x$ with radius $r_x$.  For any $y  \in B_x,$ we have the estimate
\[
|f(y) - f(x)| \le \left( \sup_{z \in B_x} |\nabla f(z)|\right) |x-y| \le \frac{|f(x)|}{2\|f\|_{V^s}} \|f\|_{V^s} = \frac{|f(x)|}{2},
\]
which implies $|f(y)| \ge \frac{|f(x)|}{2}$. Moreover, for any $y \in B_x$, we have $|y|  \geq |x|- r_x \geq |x|-1/2 \geq |x|/2$. These inequalities imply that
\[
\|m_s f\|_{L^1(\R^d)} \ge \int_{B_x} (1+|y|^s)|f(y)| \, dy  \geq |B_x|(1+2^{-s}|x|^s)|f(x)|/2 \gtrsim_{s,f} |B_x| |f(x)||x|^s.
\]
Since $|B_x| = \alpha_d r_x^d = \beta_{d,s,f} |f(x)|^d$ for some constants $\alpha_d$, $\beta_{d,s,f}$ with the indicated dependencies,  we obtain that $|f(x)| \lesssim_{d,s,f} |x|^{-\frac{s}{d+1}}$. This proves the result for $f$ and, by symmetry, one obtains the same estimate for $\widehat{f}.$ The second assertion then follows from the first via an argument using Taylor's theorem with remainder (see \cite[Lemma 6.1]{Stoller-fip-spheres} for a sketch).  
\end{proof}
We now turn to the definition of the operator that is the perturbation of the identity, expressed via the interpolation formula \eqref{eq:interpolation-formula-radial-in-thmA} of part (i) in Theorem A. 

Consider two sequences of real numbers $\varepsilon_n, \hat{\varepsilon}_n$, indexed by $n \in \N_0$. We will eventually assume that these are sufficiently small. For now, we only assume that $|\varepsilon_n|,|\hat{\varepsilon}_n| \leq 1$ for all $n \geq 1$. We want to define a bounded linear operator $T: V_{\text{rad}}^s \rightarrow V_{\text{rad}}^s$ by\footnote{We suppress dependence on $d$, $\varepsilon_n$ and $\hat{\varepsilon}_n$ from the notation for this operator.}
\begin{equation}\label{eq:definition-T-radial}
Tf = f(\varepsilon_0)a_{d/2,0} + \sum_{n=1}^{\infty}{ f(\sqrt{n+\varepsilon_n}) a_{d/2,n} } + \hat{f}(\hat{\varepsilon}_n)\tilde{a}_{d/2,0} + \sum_{n=1}^{\infty}{ \hat{f}(\sqrt{n+\hat{\varepsilon}_n})\tilde{a}_{d/2,n}}
\end{equation}
and prove that it is close to the identity. Here, the Schwartz functions $a_{d/2,n},\tilde{a}_{d/2,n}$ are those in \eqref{eq:definition-an} and are viewed as elements of $\mathcal{S}_{\text{rad}}(\R^d) \subset V_{\text{rad}}^s$. It is not clear whether and how formula \eqref{eq:definition-T-radial}  defines a bounded linear operator on some $V_{\text{rad}}^s$, but it is clear that the formula \eqref{eq:definition-T-radial} defines at least a continuous linear map $T : \mathcal{S}_{\text{rad}}(\R^d) \rightarrow \mathcal{S}_{\text{rad}}(\R^d)$, because any Schwartz seminorm of the functions $a_{d/2,n}$, $\tilde{a}_{d/2,n}$ grows at most polynomially with $n$. We will show that $T$ \emph{extends} continuously to a bounded operator on $V_{\text{rad}}^s$, by showing that $f \mapsto Tf -f$ is continuous as a linear map from $\mathcal{S}_{\text{rad}}(\R^d)$ to $V^s$, with respect to the  $V^s$-norm on target and $V^1$-norm (hence $V^s$-norm) on the source provided $\varepsilon_n, \hat{\varepsilon}_n$ are assumed sufficiently small. 

For $f \in \mathcal{S}_{\text{rad}}(\R^d)$  the interpolation formula \eqref{eq:interpolation-formula-radial-in-thmA} holds, so to estimate $\|Tf-f\|_{V^s}$ in terms of $\|f\|_{V^1}$ it suffices to estimate, for $n \geq 1$,
\begin{align*}
\|\left(f(\sqrt{n + \varepsilon_n})-f(\sqrt{n}) \right) a_{d/2,n}\|_{V^s} \lesssim |\sqrt{n + \varepsilon_n}-\sqrt{n} |  \; \|f\|_{V^1} \|a_{d/2,n}\|_{V^s}  \leq  \frac{|\varepsilon_n|}{\sqrt{n}}\, \|f\|_{V^s} \,\|a_{d/2,n}\|_{V^s}
\end{align*}
and similarly $\|\hat{f}(\sqrt{n + \hat{\varepsilon}_n})-\hat{f}(\sqrt{n})\|_{V^s}$.
Here, we implicitly used that $\tilde{a}_{d/2,n}$ is the Fourier transform of $a_{d/2,n}$ on $\R^d$. The necessary modifications for $n = 0$ are clear. Now we bound the $V^s$-norms
\[
\|a_{d/2,n}\|_{V^s} = \|\tilde{a}_{d/2,n}\|_{V^s} \leq \|b_{d/2,n}^{+}\|_{V^s} +  \|b_{d/2,n}^{-}\|_{V^s} \leq 2 \|b_{d/2,n}^{+} m_s \|_{L^1} + 2 \|b_{d/2,n}^{-} m_s \|_{L^1}.
\]
For $\beta >0$ to be determined, we can write
\begin{align*}
\|b_{d/2,n}^{\epsilon} m_s \|_{L^1(\R^d)} &= \text{area}(S^{d-1}) \int_{0}^{\infty}{|b_{d/2,n}^{\epsilon}(r)| (1+r^{s})r^{d-1} dr} \\
&\leq \text{area}(S^{d-1}) \sup_{r \geq 0}{|b_{d/2,n}^{\epsilon}(r)(1+r^{\beta})|}\int_{0}^{\infty}{\frac{1+t^s}{1+t^{\beta}}t^{d-1}dt}.
\end{align*}
In order to be able to use part (ii) of Theorem A and to make the last integral convergent, we take \[
\beta = d +s+1 \geq 2(d/2) + 2,
\]
so that for all $n \geq 0$,
\begin{equation}\label{eq:vs-norm-an}
\|a_{d/2,n}\|_{V^s} = \|\tilde{a}_{d/2,n}\|_{V^s} = M(s,d)(1+n)^{d+(s/2)+(3/2)},
\end{equation}
where $M(s,d)$ is a constant whose dependence on $s$ and $d$ may be read off from part (ii) of Theorem A, together with the the dependence on $d$ on the surface area measure of $S^{d-1}$.
%
Returning to the original estimate, we now have
\begin{equation}\label{eq:close-to-the-identity-estimate}
\|Tf-f\|_{V^s} \leq \|f\|_{V^1} M(s,d) \left((|\varepsilon_0|+|\hat{\varepsilon}_0|) + \sum_{n=1}^{\infty}{\frac{|\varepsilon_n|+|\hat{\varepsilon}_n|}{\sqrt{n}}(1+n)^{d+(s/2)+(3/2)}} \right).
\end{equation}
Note here that $\|f\|_{V^1} \lesssim \|f\|_{V^s}$ since $s \geq 1$. We see that, by assuming $\varepsilon_n, \hat{\varepsilon}_n$ to be sufficiently small, we can ensure that the above series converges and we can make its value as small as we like, in particular $< 1$.

It has been noted to us that the approach in the following proof is similar in flavour to a well-known approach in time-frequency analysis, namely the one in Corollary 5.1.3 in \cite{Groechenig}.

\begin{proof}[Proof of Theorem \ref{thm:radial-perturbation-in-intro}] Retain the above set up and assume the smallness condition \eqref{eq:smallness-assumption}, that is, assume that for some $\eta >0$ and $\delta >0$, we have $|\varepsilon_n|+|\hat{\varepsilon}_n| \leq \delta (1+n)^{-d-(s/2)-2-\eta}$ for all $n \geq 0$. 
It follows immediately from the estimate \eqref{eq:close-to-the-identity-estimate}, that the operator $T$ extends to a bounded operator from $V^s_{\text{rad}}(\R^d)$ to itself.  We can take $\delta$ small enough in terms of $s, \eta, d$ and the constant $M(s,d)$ in \eqref{eq:close-to-the-identity-estimate}, to ensure that $T$ is invertible. Having done so, we now prove the interpolation formula \eqref{eq:pertrubed-radial-interpolation-formula} for the functions $h_{d,n} := T^{-1}(a_{d/2,n}), \tilde{h}_{d,n} := T^{-1}(\tilde{a}_{d/2,n})$. Note that these are indeed zero if $n < \nu_{-}(d/2)$, because $a_{d/2,n}$ and $\tilde{a}_{d/2,n}$ are.

So let  $s' \geq (d+1)(s+2d+5 + 2\eta)$ be an integer and let  $f \in V_{\text{rad}}^{s'}$ be arbitrary. Then $f \in V_{\text{rad}}^s$ and hence $f = T^{-1}(Tf)$ since $T$ is invertible on $V_{\text{rad}}^s$. Since, by Proposition \ref{prop:decay}, $f(\sqrt{n+\varepsilon_n})$ and $\hat{f}(\sqrt{n+\hat{\varepsilon}_n})$ are both $O((1+n)^{-s'/(2(d+1))})$  as $n \rightarrow \infty$, and since the $V^s$-norms of $a_{d/2,n}$ and $\tilde{a}_{d/2,n}$ are both $O((1+n)^{d+(s/2)+(3/2)})$ by \eqref{eq:vs-norm-an}, the series \eqref{eq:definition-T-radial} converges absolutely in the $V_{\text{rad}}^s$-norm and hence equals $Tf$. Since $T^{-1}$ is continuous on $V_{\text{rad}}^s$, we may apply $T^{-1}$ to this series by applying it to each summand and thus finish the proof. 
\end{proof}

\section{Fourier uniqueness with perturbed spheres}\label{sec:perturbation-sphere}
This section is devoted to the proof of Theorem \ref{thm:non-radial-uniqueness-intro}, pertaining to not necessarily radial functions on $\R^d$, where $d \geq 2$. We start by applying some of the results proved in \cite[\S 2]{Stoller-fip-spheres}, that allow us deduce an interpolation result for non-radial Schwartz functions on $\R^d$ from corresponding results for radial functions in dimensions $d, d+2, d+4, \dots$, such as the ones in part (i) of Theorem A. 

For half-integers $k \geq 1$ let $a_{k,n}, \tilde{a}_{k,n} \in \mathcal{S}_{\text{even}}(\R)$ be defined via \eqref{eq:definition-an} and \eqref{eq:def-bk-eps}. Recall that they are both zero if $n < \nu_{-}(k) =  \lfloor (k+2)/4 \rfloor$ and that, when viewed as a radial function on $\R^{2k}$, the function $\tilde{a}_{k,n}$ is the Fourier transform of $a_{k,n}$. By the radial interpolation formulas \eqref{eq:interpolation-formula-radial-in-thmA} of part (i) in Theorem A and by \cite[Corollary~2.1]{Stoller-fip-spheres}, we have for all $f \in \mathcal{S}(\R^d)$ and all $x \in \R^d$,
\begin{align}
f(x) &= \sum_{m=0}^{\infty}{ \Big( \sum_{n=\nu_{-}(d+2m)}^{\infty}{ a_{d/2+m,n}(|x|) \frac{1}{\sqrt{n}^m} \int_{S}{f(\sqrt{n}\zeta) Z_m^d(x, \zeta) d\zeta  } }} \notag \\
 &\qquad \qquad+   \sum_{n=\nu_{-}(d+2m)}^{\infty}{ i^m\tilde{a}_{d/2+m,n}(|x|)\frac{1}{\sqrt{n}^m} \int_{S}{\hat{f}(\sqrt{n}\zeta) Z_m^d(x, \zeta) d\zeta  }} \Big) \label{eq:double-series-formula},
\end{align}
where $d\zeta$ denotes integration with respect to probability surface measure on the unit sphere $S = S^{d-1}$ (we shall switch between these two notations for the unit sphere throughout the rest of the text, omitting thus the superscript sometimes). Here, $Z_m^d(x,y)$ denotes the reproducing kernel on the space of spherical harmonics of degree $m$ and, if $n =0$, the definition of $ \frac{1}{\sqrt{n}^m} \int_{S}{f(\sqrt{n}\zeta) Z_m^d(x, \zeta) d\zeta  }$ is 
\[\lim_{r \rightarrow 0} \frac{1}{r^{m}} \int_{S}{f(r\zeta) Z_m^d(x, \zeta) d\zeta} = \sum_{|\alpha| =m}{\frac{\partial^{\alpha}f (0)}{\alpha!} \int_{S}{Z_m^d(x, \zeta) \zeta^{\alpha}d\zeta}},
\]
the last sum taken over all $\alpha \in \N_0^m$ satisfying $|\alpha| = m$ (see also \cite[Proposition 2.1]{Stoller-fip-spheres}). The series in \eqref{eq:double-series-formula} converges in the sense that $\sum_{m =0}^{\infty}{|(\cdots)|} < \infty$, but the double sum over $n$ and $m$ might not converge absolutely. Thus, a formal, unjustified manipulation of \eqref{eq:double-series-formula} suggests that the following interpolation formula could hold, for every $f \in \mathcal{S}(\R^d)$ and every $x \in \R^d$:
\begin{align}
f(x) &\stackrel{?}{=} a_{d/2,0}(|x|)f(0) + \sum_{n=1}^{\infty}{\int_{S}{K_n^d(x, \zeta)f(\sqrt{n}\zeta) d\zeta}} 
       \notag \\
       &\qquad + \tilde{a}_{d/2,0}(|x|)\hat{f}(0) +\sum_{n=1}^{\infty}{\int_{S}{\tilde{K}_n^d(x, \zeta)\hat{f}(\sqrt{n}\zeta) d\zeta}}, \label{eq:hypothetical-non-radial-interpolation-formula}
\end{align}
where, for every $n \geq 1$, we define
\begin{align}
K_n^d(x, y) = \sum_{m = 0}^{4n+1}{a_{d/2+m,n}(|x|) n^{-m/2}Z_m^d(x, y)}, \label{eq:def-A_n}\\
\tilde{K}_n^d(x, y) = \sum_{m = 0}^{4n+1}{i^m\tilde{a}_{d/2+m,n}(|x|) n^{-m/2}Z_m^d(x, y)}. \label{eq:def-tilde-A_n}
\end{align}
Note here that if $d \geq 4$, then $a_{d/2,0} = 0 = \tilde{a}_{d/2,0}$ and that the kernels \eqref{eq:def-A_n} and \eqref{eq:def-tilde-A_n} would be unchanged if we summed over from $m = 0$ to infinity, because if $m > 4n+1$, then
\[
n < \frac{m-1}{4} = \frac{m+(d/2)+2}{4}- \frac{3+(d/2)}{4} \leq \frac{m+(d/2)+2}{4} -1 \leq \nu_{-}(m+(d/2))
\]
and hence $a_{d/2+m,n} = 0 = \tilde{a}_{d/2+m,n}$. Similarly, we see that $K_n^d = 0 = \tilde{K}_n^d$ for $n < \nu_{-}(d/2) = \lfloor (d + 4)/8 \rfloor$ since in that case, we have $a_{d/2+m,n} = 0 = \tilde{a}_{d/2+m,n}$ for all $m \geq 0$. 


Even though we did not justify the derivation of the hypothetical formula \eqref{eq:hypothetical-non-radial-interpolation-formula} from \eqref{eq:double-series-formula}, we note that each integral in it is perfectly well-defined for any continuous and integrable function $f: \R^d \rightarrow \C$ and would remain perfectly well-defined if we replaced $\sqrt{n}$ in $f(\sqrt{n}\zeta)$ by $f((\sqrt{n}+ \varepsilon_n(\zeta)) \zeta)$ for any continuous (or bounded measurable) function $\varepsilon_n : S \rightarrow \R$, because of the definition of $K_n^d$ and $\tilde{K}_n^{d}$ as \emph{finite} sums. In fact, for each fixed $r > 0$,  the function $\xi \mapsto \int_{S}{K_n^d(\xi, \zeta)f(r\zeta)d \zeta}$  belongs to $\mathcal{S}(\R^d)$ and it is the Fourier transform of $x \mapsto \int_{S}{ \tilde{K}_n^d(x, \zeta)f(r \zeta)d \zeta}$.
 

We also remark that,  by the orthogonality relations of spherical harmonics and the Hecke-Funk formula, \eqref{eq:hypothetical-non-radial-interpolation-formula} is valid whenever the Schwartz function $f$ is equal to a radial Gaussian multiplied by a polynomial.  

We now turn to the perturbation idea and slowly work up to the proof of Theorem \ref{thm:non-radial-uniqueness-intro}. Consider two sequences of measurable functions $\varepsilon_n, \hat{\varepsilon}_n : S^{d-1} \rightarrow \R$, $n \geq 1$, two vectors $\varepsilon_0, \hat{\varepsilon}_0 \in \R^d$ and put
\begin{equation}\label{eq:sup-of-epsilon-functions}
\sigma_n = \sup_{\zeta \in S^{d-1}}{|\varepsilon_n(\zeta)|} + \sup_{\zeta \in S^{d-1}}{|\hat{\varepsilon}_n(\zeta)|}, \qquad \sigma_0 = |\varepsilon_0| +|\hat{\varepsilon}_0|.
\end{equation}
Recall the definition of the Banach spaces $V^s = V^s(\R^d)$ in \eqref{eq:definition-Vs-space}. We will work with a general parameter $s \geq 1$ for a while and later specialize to $s = 1$. Our goal is to show that
\begin{align}\label{eq:def-Tf-first-line}
Tf(x)= f(x)-  a_{d/2,0}(|x|)\left[f(0)-f(\varepsilon_0)\right]- \sum_{n=1}^{\infty}{\int_{S}{K_n^d(x, \zeta)\left[ f(\sqrt{n}\zeta)-f(\sqrt{n}\zeta+ \varepsilon_n(\zeta)\zeta) \right]d\zeta}} \\
\quad \qquad - \tilde{a}_{d/2,0}(|x|)\left[\hat{f}(0)- \hat{f}(\hat{\varepsilon}_0)\right] -\sum_{n=1}^{\infty}{\int_{S}{\tilde{K}_n^d(x, \zeta)\left[ \hat{f}(\sqrt{n}\zeta)-\hat{f}(\sqrt{n}\zeta+ \hat{\varepsilon}_n(\zeta)\zeta) \right]d\zeta}} \label{eq:def-Tf-second-line}
\end{align}
defines a \emph{bounded} linear operator $T : V^s \rightarrow V^s$ satisfying $\|\id_{V^s} -T\| < 1$, provided the quantities $\sigma_n$ are sufficiently small.  We can achieve this goal similarly to what we did in the radial setting in \S \ref{sec:perturbation-radial}. Namely we simply use $|f(0)-f(\varepsilon_0)| \lesssim \|f\|_{V^s} |\varepsilon_0|$ and
\[
 \left \| \int_{S}{K_n^d( \cdot , \zeta)\left[ f(\sqrt{n}\zeta)-f(\sqrt{n}\zeta+ \varepsilon_n(\zeta)\zeta) \right]d\zeta}\right\|_{V^s} \lesssim \sigma_n \|f\|_{V^1} \sup_{\zeta \in S}{\|K_n^d( \cdot , \zeta)\|_{V^s}}
\]
and the same for the terms involving $\hat{f}$. In both estimates, the implied constants are absolute. To bound the $V^s$-norms, we compute, for fixed $\zeta \in S^{d-1}$, using polar coordinates
\begin{align}
\|K_n^d( \cdot , \zeta)\|_{V^s} &\leq \sum_{m=0}^{4n+1}{n^{-m/2} \int_{\R^d}{|a_{d/2+m,n}(|x|)| |Z_m^d(x, \zeta)| (1+|x|^s) dx}} \label{eq:V^s-norm-estimate-An-first-line}\\
&= \sum_{m=0}^{4n+1}{n^{-m/2} \int_{0}^{\infty}{\int_{S^{d-1}}|a_{d/2+m,n}(r)| r^{m} |Z_m^d(\omega, \zeta)| (1+r^s)d \omega\, r^{d-1} dr}}  \\
& \lesssim_d \sum_{m=0}^{4n+1}{n^{-m/2} \dim{( \mathcal{H}_m(\R^d))} \int_{0}^{\infty}{|a_{d/2+m,n}(r)|  (1+r^s)r^{m+ d-1} dr}} \label{eq:V^s-norm-estimate-An},
\end{align} 
where we bounded the  $L^1$-norm of $\omega \mapsto Z_m^d(\omega, \zeta)$ by its $L^2$-norm, which is independent of $\zeta$ and bounded by the indicated dimension. For every index $m$ in the above sum we write, for some $\beta_m >0$ to be chosen,
\[
\int_{0}^{\infty}{|a_{d/2+m,n}(r)|  (1+r^s)r^{m+ d-1} dr} \leq \sup_{r \geq 0}{|a_{d/2+m,n}(r)|(1+r^{\beta_m})} \int_{0}^{\infty}{\frac{1+t^s}{1+t^{\beta_m}}t^{m+d-1}dt}.
\]
For the integral to converge we want $s-\beta_m + m+d-1 <-1$, but we also want $\beta_m \geq 2(d/2+m) +2$ so that we can apply the estimate \eqref{eq:final-bound-bkn-in-thmA} in part (ii) of Theorem A. Thus, we choose $\beta_m =2(d/2+m) + (1+s)$ in which case the integrand in the $t$-integral is $O(t^{-2-m})$ as $t \rightarrow \infty$ and can thus bounded independently of $m$. Thanks to the bound \eqref{eq:final-bound-bkn-in-thmA} in part (ii) of Theorem A, we have that $\sup_{r \geq 0}{|a_{d/2+m,n}(r)|(1+r^{\beta_m})}$ is bounded by an  absolute constant times
\begin{equation}\label{eq:very-bad-bound}
  n^{d+2m+\frac{s+3}{2}} \Gamma(\tfrac{s+3}{2}) \left( 1+ \left[(d+2m +1+s)/(2 \pi e) \right]^{ \frac{d+2m+1+s}{2}}  \right) e^{c_1 m + c_2 s + c_3 d + c_4},
\end{equation}
where $c_1, c_2, c_3, c_4$ are absolute constants. To simplify a bit, we set $s = 1$ from now on. Returning to  \eqref{eq:V^s-norm-estimate-An} and using $\dim{\mathcal{H}_m(\R^d)} \lesssim_d (1+m)^{d-2}$, we get
\begin{align}
\sup_{\zeta  \in S^{d-1}}{\|K_n^d( \cdot , \zeta)\|_{V^1}} & \lesssim_{d}  \sum_{m=0}^{4n+1}{n^{-m/2}(1+m)^{d-2}n^{2m+ d+ 2}(2m +d+2)^{m+(d/2)+1}e^{c_1 m} } \notag \\
&\leq (4n+2)(4n+3)^{d-2} n^{d+2 + (3/2)(4n+2)}(d+4 + 8n)^{4n+(d/2) + 2}e^{c_1(4n+1)} \notag \\
& \lesssim_d n^{10n + (5/2)d + c_5}, \label{eq:final-estimate-V1-norm-of-An}
\end{align}
for some absolute constant $c_5>0$. Note that the same bound holds for $\tilde{K}_n^d$. Thus, if we impose
\begin{equation}\label{eq:smallness-condition-on-sigma-n}
\sigma_n  \leq \delta (1+n)^{-10n - (5/2)d-c_5-1.1}, \qquad \delta = \delta_d > 0,
\end{equation}
then the operator $T$ is bounded on $V^1(\R^d)$ and also invertible if we moreover assume that $\delta$ is sufficiently small in terms of the implied constant in \eqref{eq:final-estimate-V1-norm-of-An}. We are now ready to give the proof of Theorem \ref{thm:non-radial-uniqueness-intro}. 
\begin{proof}[Proof of Theorem \ref{thm:non-radial-uniqueness-intro}]\label{sec:proof-of-thm-non-radial}
Retain the above notations and assume that \eqref{eq:smallness-condition-on-sigma-n} holds with $\delta$ so small that the operator $T$ is invertible on $V^1(\R^d)$.  Let $f \in \mathcal{S}(\R^d)$ be such that 
\begin{equation}\label{eq:vanishing-condition-non-radial-f}
f(\sqrt{n}\zeta + \varepsilon_n(\zeta) \zeta) = 0 = \hat{f}(\sqrt{n}\zeta + \hat{\varepsilon}_n(\zeta)\zeta)
\end{equation}
for all $n \geq n_0(d) =  \lfloor (d+4)/8 \rfloor$ (with the meaning of $f(\varepsilon_0) = 0= \hat{f}(\hat{\varepsilon}_0)$ for $n = 0$). Our goal is to show that $f = 0$. Since $T$ is invertible and $f \in V^1(\R^d)$, this is the same as showing $Tf = 0$, equivalently $f-Tf = f$. Note that under the vanishing assumption \eqref{eq:vanishing-condition-non-radial-f}, the series defining $f-Tf$ is equal to the right hand side of \eqref{eq:hypothetical-non-radial-interpolation-formula}, so it will suffice to show that that series equals $f$.

Writing $f(\sqrt{n}\zeta) = f(\sqrt{n}\zeta)-f(\sqrt{n}\zeta +\varepsilon_n(\zeta)\zeta)$ and applying Fourier inversion (or alternatively the mean-value theorem), we see that $f$, and  similarly $\hat{f}$, decay incredibly fast along the spheres $\sqrt{n}S^{d-1}$. Precisely, we have, by \eqref{eq:smallness-condition-on-sigma-n},
\[
\sup_{ \sqrt{n} S^{d-1}}{|f|} + \sup_{ \sqrt{n} S^{d-1}}{|\hat{f}|} \lesssim_{f} \sigma_n \leq \delta (1+n)^{-10n - (5/2)d-c_5-1.1}.
\]
The $V^1$-norm controls the $L^{\infty}$-norm, so the initial step (the application of the triangle inequality in \eqref{eq:V^s-norm-estimate-An-first-line}) in our derivation  of the estimate \eqref{eq:final-estimate-V1-norm-of-An} shows that the double series \eqref{eq:double-series-formula} converges absolutely and hence the derivation (interchange of sums an integrals) of \eqref{eq:hypothetical-non-radial-interpolation-formula} from \eqref{eq:double-series-formula} is justified for this particular function $f$. Thus $f-Tf =f$ and hence $f = 0$ as desired.
\end{proof}

\begin{remark}\label{rmk:discrete-uniqueness-sets}
One notices, from the proof above, that the perturbations we chose in the statement of Theorem \ref{thm:non-radial-uniqueness-intro} have been taken with the special property that they are \emph{parallel} to the vector of the sphere being perturbed. That is, we have considered perturbations of the form $\sqrt{n}\zeta + \varepsilon_n(\zeta)\zeta,$ with $\zeta \in S^{d-1}.$ This has been adopted as one of the primary goals of this work was to prove that one may deform the spheres $\sqrt{n}S^{d-1}$ slightly into \emph{ellipsoids}, which is one of the consequences of Theorem \ref{thm:non-radial-uniqueness-intro}. 
	
One sees, however, that the same statement holds -- with almost exactly the same proof -- if one replaces the perturbations $\varepsilon_n(\zeta)\zeta$ in the above proof by more general vector-valued perturbations  $\varepsilon_n(\zeta) \in \R^d$ with \emph{any} measurable choice of direction $\frac{\varepsilon_n(\zeta)}{|\varepsilon_n(\zeta)|} \in S^{d-1},$ as long as $\sup_{\zeta \in S^{d-1}, n \ge 1}{ |\varepsilon_n(\zeta)| n^{An + B}}$ is sufficiently small, for some constants $A,B>0$ (depending only on $d$). This may be used, as we shall see, to obtain some \emph{new discrete Fourier uniqueness sets.} 

Indeed, fix, for each $n \geq 1$, two partitions
\begin{equation}\label{eq:partition-spheres}
	\sqrt{n}S^{d-1} = \bigsqcup_{i \in I(n)}{ \sqrt{n}\Omega_{n,i}}, \qquad \sqrt{n}S^{d-1} = \bigsqcup_{j \in J(n)}{\sqrt{n} \tilde{\Omega}_{n,j}},
\end{equation}
where $\Omega_{n,i}, \tilde{\Omega}_{m,j} \subset S^{d-1}$ are  nonempty measurable subsets and $I(n), J(n)$ are finite sets of indices. Let $\zeta_0, \tilde{\zeta}_0 \in \R^d$ and
$\zeta_{n,i} \in \Omega_{n,i}, \tilde{\zeta}_{n,j}  \in \tilde{\Omega}_{n,j}$
be arbitrary, and suppose that $|\zeta_0| + |\tilde{\zeta}_0| \leq \delta$ and that these partitions satisfy 
\[
\max{\{\diam(\sqrt{n} \Omega_{n,i}), \diam(\sqrt{n}\tilde{\Omega}_{n,j})\}} \leq \mu n^{-An - B} \quad \text{ for all } n \in \N\text{ and } (i,j) \in I(n) \times J(n),
\]
for $\mu, \delta > 0$ sufficiently small, $A,B > 0$ sufficiently large (in terms of $d$ only). Then the pair of subsets 
\begin{equation}\label{eq:uniqueness-pair}
	\left( \{\zeta_0\} \cup \{\sqrt{n}\zeta_{n,j}\}_{ n \in \N, i \in I(n)},  \{ \tilde{\zeta}_0\} \cup \{ \sqrt{n}\tilde{\zeta}_{n,j}\}_{ n \in \N, j \in J(n)} \right)
\end{equation}
is a Fourier uniqueness pair for $\Ss(\R^d)$. This can be proved by defining the measurable functions $\varepsilon_n, \tilde{\varepsilon}_n : S^{d-1} \rightarrow \R^d$ by 
\[
\varepsilon_n(\zeta)  = \sqrt{n}\left(\sum_{i \in I(n)}{\chi_{\Omega_{n,i}}(\zeta)\zeta_{n,i} }\right) - \sqrt{n}\zeta, \qquad \tilde{\varepsilon}_n(\zeta)  = \sqrt{n} \left(\sum_{j \in J(n)}{ \chi_{\tilde{\Omega}_{n,j}}(\zeta) \tilde{\zeta}_{n,j}   }\right) - \sqrt{n}\zeta.
\]
By replacing the ``old" radial perturbations $\varepsilon_n(\zeta)\zeta$ in \eqref{eq:def-Tf-first-line} by the ``new" perturbations $\varepsilon_n(\zeta)$,  defined above and by making similar adjustments elsewhere in the proof, we obtain the following result.

%

\begin{corollary}\label{cor:discrete-uniqueness-sets}
There exist \emph{discrete} sets $A, B \subset \R^d$ forming a Fourier uniqueness pair for $\Ss(\R^d)$,  all of whose points lie on $\cup_{n \geq 1}{\sqrt{n}S^{d-1}}$ (except possibly for one point close to the origin), and such that the numbers $|\sqrt{n}S^{d-1} \cap A|$ and $|\sqrt{n}S^{d-1} \cap B|$ are $O_d(n^{\alpha_d n + \beta_d})$ for some $\alpha_d, \beta_d >0$. 
\end{corollary}

This is, in particular, directly related to the recent work of the second author with D. Radchenko \cite{RadchenkoStoller}, where they exhibited many examples of discrete Fourier \emph{non-uniqueness} subsets $A,B$ of $\cup_{n \ge 0} \sqrt{n} S^{d-1},$ where $|\sqrt{n}S^{d-1} \cap A|$ and $|\sqrt{n}S^{d-1} \cap B|$ are of the order of $c \cdot n^{d-1}$ as $n \to \infty.$ Corollary \ref{cor:discrete-uniqueness-sets} may be then regarded as a positive counterpart of their main result. 
\end{remark}

\section{An application to Heisenberg uniqueness pairs}\label{sec:application}

We will prove Theorem \ref{thm:application-hup} as a consequence of Theorem \ref{thm:radial-perturbation-in-intro} using the main strategy in \cite{HMRV}. Consider a (large) parameter $s_0 \geq 1$ and an \emph{odd} function $f \in V^{s_0}(\R)$. Assume that the Fourier transform $\widehat{\mu_f}$, of the measure $\mu_f$ defined as in \eqref{eq:definition-of-mu_f}, vanishes on a perturbed lattice cross given as in \eqref{eq:perturbed-lattice-cross} with $\varepsilon_0 = \hat{\varepsilon}_0$ and $\alpha = \beta =1$.  We want to prove that if $s_0$ is sufficiently large and the $\varepsilon_n$, $\hat{\varepsilon}_n$ are sufficiently small, then $f = 0$.  Consider the function
\[
g(t) = \alpha(t)f(t), \quad \text{where} \quad  \alpha(t) =  t^3 \sqrt{1 +t^{-4}} = t \sqrt{1+t^4},
\]
which appeared in the integral characterizing $\mu_f$ in \eqref{eq:definition-of-mu_f}. Since $f$ is odd, $g$ is even and we clearly have $g(0) = 0 = g'(0)$. An elementary calculations shows for that for each integer $j \geq 0$, there is a polynomial $P_j$ of degree at most $3j  +1$, so that $\alpha^{(j)}(t) = P_j(t) (1+t^4)^\frac{1-2j}{2}$. In particular, we have $\alpha^{(j)}(t) = O(|t|^{3-j}) = O(|t|^3)$ as $|t| \rightarrow \infty$. Combining this observation with Proposition \ref{prop:decay}, we see that $g \in V^{s_1}(\R)$ where $s_1 = s_1(s_0) \rightarrow \infty$ as $s_0 \rightarrow \infty$.

Next, we define the function $G : \R \rightarrow \C$ by $G(x) := \int_{- \infty}^{x}{g(t) dt}$. We recall that, as part of our assumption,  we have 
\[
0 = \widehat{\mu_f}(\varepsilon_0, \hat{\varepsilon}_0) = \widehat{\mu_f}(0,0) = \int_{\R}{g(t)dt} = 0.
\]
Therefore, $ G(x) = -\int_{x}^{\infty}{g(t)dt}$. Moreover, $G'(x)= g(x)$ and $G(0) = 0$ since $g$ is even. In fact, since $g(0) = 0=g'(0)$, we have $G(0) = G'(0) = G''(0) =0$.  Also,  $G \in V^{s_2}(\R)$,  where $s_2 = s_2(s_1) \rightarrow \infty$ as $s_1 \rightarrow \infty$. To see this, note that we have, for $1 < n \leq s_1$ and $x> 0$, the simple estimate
\[
|G(x)| = \left| -\int_{x}^{\infty}{g(t)t^{n}t^{-n}dt}\right|  \lesssim_{n,g}\int_{x}^{\infty}{t^{-n}dt} = \frac{1}{n-1}x^{-(n-1)}
\]
and similarly for $x<0$. Analogously, we have that $\partial_x G = g.$ As we have seen that $G \in L^2,$ we may take Fourier transforms on both sides of this identity. Thus, we have $\widehat{G}(\xi) \cdot \xi^k = \frac{\xi^{k-1}}{2 \pi} \widehat{g}(\xi).$ As $g \in V^{s_1}(\R),$ this plainly implies that $G \in V^{s_2}(\R),$ with $s_2 \to \infty$ as $s_1 \to \infty,$ as desired. 

Finally, we consider the function $\Phi : \R^4 \rightarrow \C$, defined by 
\begin{equation}\label{eq:definition-Phi}
\Phi(x) = \int_{\R}{G(\tau) e^{\pi i \tau |x|^2}d \tau}, \quad x \in \R^4.
\end{equation}
This transformation of $G$ was first introduced and studied in \cite{HMRV} in this framework. So defined, $\Phi$ is obviously radial, and we claim that $\Phi \in V_{\text{rad}}^{s_3}(\R^4)$, where $s_3 =s_3(s_2) \rightarrow \infty$, as $s_2 \rightarrow \infty$. To prove this claim, note that $\Phi(x) = \mathcal{F}_{1}(G)(-|x|^2/2)$, where $\mathcal{F}_{1}(\phi)(\xi) = \int_{\R}{\phi(\tau) e^{-2 \pi i \tau \xi }dt}$ denotes the one-dimensional Fourier transform of a function $\phi : \R \rightarrow \C$. A simple computation using polar coordinates and a quadratic change of variables then shows that $\Phi m_{s_3} \in L^1(\R^4)$ as long as $s_3 \geq 2s_2 + 4$. To show that also $\widehat{\Phi}m_{s_3} \in L^1(\R^4)$, it suffices to show that sufficiently many partial derivatives of $\Phi$ are in $L^1(\R^4)$, which is readily seen via differentiation  under the integral sign and by applying Proposition \ref{prop:decay} to $G$ and its derivatives. The next lemma gives a formula for the Fourier transform of $\Phi$ on $\R^4$.
\begin{lemma}\label{lemma:fourier-transform-radial}
Let $f, g, G$ and $\Phi$ be as above. Then, for all $y \in \R^4$,
\begin{equation}\label{eq:fourier-transform-radial-Phi}
\mathcal{F}_4(\Phi)(y) = -{\int_{\R}{G(\tau)\tau^{-2}e^{ \pi i (-1/\tau) |y|^2}d\tau}}.
\end{equation}
Recall here that  $\lim_{\tau \rightarrow 0}{G(\tau)\tau^{-2}} = 0$, as noted above.
\end{lemma}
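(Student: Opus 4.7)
The plan is to argue by Gaussian regularization, exploiting the standard Fourier transform identity for complex Gaussians on $\R^4$: for $z \in \H$,
\[
\int_{\R^4} e^{\pi i z |x|^2 - 2\pi i \langle x, y \rangle} dx = (z/i)^{-2} e^{\pi i (-1/z)|y|^2} = -z^{-2} e^{-\pi i |y|^2/z},
\]
with the principal branch consistent with the convention in \S \ref{sec:prelim-for-radial}. For real $\tau$ the integrand $e^{\pi i \tau |x|^2}$ is merely oscillatory on $\R^4$, so a naive Fubini argument in \eqref{eq:definition-Phi} is unavailable, and the formal identity must be recovered through complex shifts.

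For $\epsilon > 0$, I would introduce the regularization $\Phi_\epsilon(x) := e^{-\pi \epsilon |x|^2} \Phi(x)$. Since $\Phi \in V^{s_3}_{\text{rad}}(\R^4) \subset L^1(\R^4)$, dominated convergence gives $\Phi_\epsilon \to \Phi$ in $L^1(\R^4)$ as $\epsilon \to 0^+$, whence $\mathcal{F}_4(\Phi_\epsilon)(y) \to \mathcal{F}_4(\Phi)(y)$ uniformly in $y$. On the other hand, writing
\[
\Phi_\epsilon(x) = \int_\R G(\tau) e^{\pi i (\tau + i \epsilon)|x|^2} d\tau,
\]
the integrand $G(\tau) e^{-\pi \epsilon |x|^2}$ is absolutely integrable on $\R \times \R^4$ (using $\epsilon > 0$ and $G \in L^1(\R)$, the latter following from Proposition \ref{prop:decay} applied to $G \in V^{s_2}(\R)$). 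Fubini, combined with the Gaussian Fourier transform at $z = \tau + i \epsilon \in \H$, then yields
\[
\mathcal{F}_4(\Phi_\epsilon)(y) = -\int_\R G(\tau)(\tau + i \epsilon)^{-2} e^{-\pi i |y|^2/(\tau + i \epsilon)} d\tau.
\]

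The last step is to pass to the limit $\epsilon \to 0^+$ on the right-hand side by dominated convergence. The oscillating factor has modulus at most $1$, since
\[
\real\!\left(-\frac{\pi i}{\tau + i \epsilon}\right) = -\frac{\pi \epsilon}{\tau^2 + \epsilon^2} \leq 0,
\]
and $|(\tau + i \epsilon)^{-2}| \leq \tau^{-2}$; thus $|G(\tau)| \tau^{-2}$ dominates pointwise. Its integrability on $\R$ is the crux of the argument: at infinity it follows from the decay of $G$ recorded before the lemma, while near $\tau = 0$ it rests on the triple vanishing $G(0) = G'(0) = G''(0) = 0$ (a consequence of $f$ being odd, $\alpha(0) = 0$, and $\widehat{\mu_f}(0,0) = 0$), which by Taylor's theorem yields $G(\tau) = o(\tau^2)$. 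Comparing the two limits then gives the claimed formula. The main technical obstacle is exactly this integrability at the origin, which is why the extra structural hypotheses on $f$ in Theorem \ref{thm:application-hup} are imposed.
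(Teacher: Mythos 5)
Your proof is correct and is essentially the paper's own argument: both regularize by inserting a Gaussian factor $e^{-\pi\epsilon|x|^2}$ (equivalently, shifting $\tau\mapsto\tau+i\epsilon$), apply Fubini and the complex Gaussian Fourier transform on $\R^4$, and pass to the limit by dominated convergence using $|(\tau+i\epsilon)^{-2}|\le\tau^{-2}$ together with the triple vanishing of $G$ at the origin. The only cosmetic difference is that you invoke $L^1$-convergence $\Phi_\epsilon\to\Phi$ to transfer the limit to the Fourier side, whereas the paper interchanges the limit with the $\R^4$-integral directly via dominated convergence.
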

\begin{proof}
For $z \in \H$ and $x \in \R^4$, let $\varphi_z(x) = e^{\pi i z|x|^2 }$. Recall that 
\begin{equation}\label{eq:FT-complex-Gaussian-4d}
\mathcal{F}_4(\varphi_z) = (z/i)^{-2}\varphi_{-1/z} = -z^{-2}\varphi_{-1/z}.
\end{equation}
The idea is to add some positive imaginary part to the integration variable $\tau$ in \eqref{eq:definition-Phi}, use the above formula \eqref{eq:FT-complex-Gaussian-4d} and take a limit. To implement it, we write
\begin{align*}
\mathcal{F}_4(\Phi)(y) &= \int_{\R^4}{ e^{- 2\pi i \langle x, y \rangle}\int_{\R}{G(\tau) e^{\pi i \tau |x|^2}d\tau}dx}\\
						&= \int_{\R^4}{ e^{- 2\pi i \langle x, y \rangle}\lim_{b \to 0^{+}}{\int_{\R}{G(\tau) \varphi_{\tau + ib}(x) d\tau}}dx}.
\end{align*}
We want to interchange the limit with the integral over $\R^4$. To justify this, introduce  
\[
U_{b,y}(x) := e^{- 2\pi i \langle x, y \rangle} \int_{\R}{G(\tau) \varphi_{\tau + ib}(x) d\tau} = e^{- 2\pi i \langle x, y \rangle}e^{- b |x|^2}\Phi(x) 
\]
for $b>0$ and $y \in \R^4$. Then $|U_{b,y}(x)| \leq |\Phi(x)|$ for all $x,y,b$ and $|\Phi| \in L^1(\R^4)$ as noted above. Thus, by dominated convergence,
\[
\mathcal{F}_4(\Phi)(y) = \lim_{b \to 0^{+}}{\int_{\R^4}{e^{- 2 \pi i \langle x, y \rangle} \int_{\R}{G(\tau) \varphi_{ \tau + ib}(x) d\tau }dx}}.
\]
Since for each fixed $b> 0$ , we have $|G(\tau) \varphi_{ \tau + ib}(x)e^{- 2 \pi i \langle x, y \rangle} |\leq |G(\tau)|e^{- \pi b |x|^2}$ and the latter is absolutely integrable over $(x,\tau) \in \R^4 \times \R$, we can apply Fubini's theorem and write
\begin{align*}
\mathcal{F}_4(\Phi)(y) &= \lim_{b \to 0^{+}}{\int_{\R}{G(\tau)\int_{\R^4}{e^{- 2 \pi i \langle x, y \rangle} \varphi_{ \tau + ib}(x)dx} d\tau }}\\
& = \lim_{b \to 0^{+}}{\int_{\R}{G(\tau)(-1) (\tau+ib)^{-2}e^{\pi i (-1/(\tau + ib))|y|^2} d\tau }},
\end{align*}
where the last equation follows from \eqref{eq:FT-complex-Gaussian-4d}. Finally, we may again take the limit into the integral by dominated convergence and thus conclude the proof.
\end{proof}

\begin{proof}[Proof of Theorem \ref{thm:application-hup}]
Retain the above set up and notations. We claim that for all $n,m$ such that $n + \varepsilon_n  > 0$, $m + \hat{\varepsilon}_m > 0$, the following equivalences hold:
\begin{align}
\Phi(\sqrt{n+\varepsilon_n}) = 0   \quad \Longleftrightarrow   \quad \widehat{\mu_f}(n + \varepsilon_n, 0) = \int_{\R}{g(\tau) e^{\pi i \tau(n + \varepsilon_n)}d\tau} = 0, \label{eq:orthogonality-1}\\
\mathcal{F}_4(\Phi)(\sqrt{m+\hat{\varepsilon}_m}) = 0   \quad \Longleftrightarrow   \quad \widehat{\mu_f}(0, m + \varepsilon_m) = \int_{\R}{g(\tau) e^{\pi i (1/\tau)(m +\hat{\varepsilon}_m)}d\tau} = 0. \label{eq:orthogonality-2}
\end{align}
To prove this claim, we apply Lemma \ref{lemma:fourier-transform-radial} and integration by parts (as in \cite{HMRV}) to the integrals expressing $\Phi(x)$ and $\mathcal{F}_4(\Phi)(y)$ for $x,y \in \R^{4} \setminus \{0 \}$. Since $G$ vanishes at infinity, we have
\[
\Phi(x) = \frac{-1}{\pi i|x|^2 }\int_{\R}{G'(\tau)e^{\pi i \tau |x|^2}d\tau}= \frac{i}{\pi |x|^2 }\int_{\R}{g(\tau)e^{\pi i \tau |x|^2}d\tau}.
\]
As for $\mathcal{F}_4(\Phi)(y)$, we integrate 
\[
\tau^{-2}e^{\pi i (-1/\tau)|y|^2} = \frac{1}{\pi i |y|^2} \partial_{\tau}e^{\pi i (-1/\tau)|y|^2}
\]
and differentiate $G$ to obtain
\[
\mathcal{F}_4(\Phi)(y) =  \frac{1}{\pi i |y|^2} \int_{\R}{g(\tau)e^{\pi i(-1/\tau)|y|^2}d\tau}.
\]
Thus we deduce, in an explicit manner, the formula in \cite[equation~(1.2.7)]{HMRV} and the claimed equivalences \eqref{eq:orthogonality-1}, \eqref{eq:orthogonality-2} (where we also use that $g$ is even to deduce \eqref{eq:orthogonality-2}). To finish the proof of Theorem \ref{thm:application-hup}, we choose $s_0$ so large that, under the transformations 
\[
f \mapsto g \mapsto G \mapsto \Phi,  \qquad V_{\text{odd}}^{s_0}(\R) \rightarrow V_{\text{even}}^{s_1}(\R) \rightarrow V_{\text{odd}}^{s_2}(\R) \rightarrow V_{\text{rad}}^{s_3}(\R^4),
\]
described above, the parameter $s_3 = s_3(s_0)$ is so large that we can apply Theorem  \ref{thm:radial-perturbation-in-intro} with input $(d,s, \eta) = (4,1,1/2)$, to the function $\Phi \in V_{\text{rad}}^{s_3}(\R^4)$. (This means that we have to take $s_0$ so large that $s_3 \geq 120 =  (2 \cdot 4)(1 + 2 \cdot 4 + 5 + 2 \cdot (1/2))$.) If $s_0$ has this property, then Theorem \ref{thm:radial-perturbation-in-intro} guarantees the existence of $\delta > 0$ such that, if $|\varepsilon_n|+ |\hat{\varepsilon}_n| \leq \delta n^{-7}$ for all $n \geq 1$, our vanishing assumptions on $\widehat{\mu_f}$ and the equivalences in \eqref{eq:orthogonality-1}, \eqref{eq:orthogonality-2} force $\Phi$ to be zero. Therefore, by injectivity of the Fourier transform on $\R$, we have $G = 0$, which implies that $g = G' = 0$, which implies $f = 0$, as desired.
\end{proof}

%

\end{document}